\theoremstyle{plain}
\newtheorem{lema}{Lemma}[section]
\newtheorem{prop}[lema]{Proposition}
\newtheorem{teo}[lema]{Theorem}
\newtheorem{coro}[lema]{Corollary}
\newtheorem*{teoA}{Theorem A}
\newtheorem*{teoB}{Theorem B}
\theoremstyle{definition}
\newtheorem*{defi}{Definition}
\newtheorem{ej}[lema]{Example}
\newtheorem{obs}[lema]{Remark}
\newtheorem{obss}[lema]{Remarks}
\newcommand{\M}{\mathfrak{M}}
\def\set#1{\{#1\}}
\def\r{\mathcal{R}}
\begin{document}

\title[A complex is determined by its set of discrete Morse functions]{A simplicial complex is uniquely determined by its set of discrete Morse functions}

\author[N. A. Capitelli and E. G. Minian]{Nicolas Ariel Capitelli and Elias Gabriel Minian}

\thanks{Researchers of CONICET. Partially supported by grants ANPCyT PICT-2011-0812, CONICET PIP 112-201101-00746 and UBACyT 20020130100369.}

\subjclass[2010]{52B05, 57Q05, 57M15, 05C10}

\keywords{Discrete Morse theory, discrete Morse complex, collapsibility}

\address{Departamento de Matem\'atica-IMAS\\
 FCEyN, Universidad de Buenos Aires\\ Buenos
Aires, Argentina.}

\email{ncapitel@dm.uba.ar} \email{gminian@dm.uba.ar}

\begin{abstract} We prove that a connected simplicial complex is uniquely determined by its complex of discrete Morse functions. This settles a question raised by Chari and Joswig. In the $1$-dimensional case, this implies that the complex of rooted forests of a connected graph $G$ completely determines $G$. 
\end{abstract}


\maketitle

\section{Introduction}

The \emph{complex of discrete Morse functions} $\M(K)$ of a finite simplicial complex $K$ was introduced by Chari and Joswig in \cite{ChJo} to study the topology of simplicial complexes in terms of their sets of discrete deformations. Despite the potential utility of this complex, very little was known about the relationship between $K$ and $\M(K)$. Chari and Joswig studied some properties of the complexes associated to graphs and simplices and computed the homotopy type of the complex associated to the $2$-simplex. Their work was shortly followed  by Ayala, Fern\'andez, Quintero and Vilches, who described the structure of the \emph{pure Morse complex} of a graph $G$, i.e. the subcomplex of $\M(G)$ generated by the simplices of maximal dimension  \cite{AyFeQuVi1}. As pointed out in \cite{ChJo}, the construction of $\M(K)$ in the context of graphs was already implicit in the work of Kozlov \cite{Koz1}, who studied complexes arising from directed sub-trees of a given (directed) graph. Kozlov proved shellability of the complexes associated to complete graphs and computed the homotopy type of the complexes associated to paths and cycles.

The aim of this article is to settle the connection between a simplicial complex and its complex of discrete Morse functions. We show that $K$ is completely determined by $\M(K)$. Concretely, our main result is the following.

\begin{teoA}\label{Theorem:MainForComplexes} Let $K,L$ be finite connected simplicial complexes. If $\M(K)$ is isomorphic to $\M(L)$ then $K$ is isomorphic to $L$.\end{teoA}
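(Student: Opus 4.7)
My plan is to reconstruct the face poset of $K$ directly from the abstract simplicial complex $\M(K)$. Since a finite simplicial complex is determined up to isomorphism by its face poset, this will settle the theorem. Recall that the vertices of $\M(K)$ are in canonical bijection with the covering pairs $(\sigma,\tau)$ of the Hasse diagram $H(K)$ of $K$, and a set of such pairs spans a simplex of $\M(K)$ precisely when it forms an acyclic matching of $H(K)$.

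The first step is to describe the $1$-skeleton of $\M(K)$ combinatorially. I would show that acyclicity is automatic for matchings of size two in a simplicial complex: a length-$4$ cycle would require two distinct covering pairs $(\sigma_1,\tau_1),(\sigma_2,\tau_2)$ with $\sigma_1$ and $\sigma_2$ both codimension-one faces of both $\tau_1$ and $\tau_2$, but any two distinct codimension-one faces of a simplex $\tau$ satisfy $\sigma_1\cup\sigma_2=\tau$, which forces $\tau_1=\tau_2$ and contradicts disjointness of the two pairs. Consequently, two covering pairs are joined by an edge of $\M(K)$ if and only if they are disjoint as subsets of the simplex set of $K$; equivalently, $\M(K)^{(1)}$ is the complement of the line graph $L(H(K))$.

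The next step is to recover $H(K)$ as an unoriented graph from its line graph. The graph $H(K)$ is connected (because $K$ is), triangle-free (its edges join only adjacent rank levels), and a short case analysis shows that the simplicial-complex axioms prevent $H(K)\cong K_{1,3}$. Hence Whitney's theorem applies without exception and produces $H(K)$ uniquely from $L(H(K))$, and so from $\M(K)^{(1)}$.

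The final step is to orient $H(K)$. Its canonical graded partition determines the rank levels up to reversal, and the correct orientation is pinned down by the axiom that a $d$-simplex has exactly $d+1$ codimension-one faces: only one of the two candidate orderings realises, at every rank $r\geq 1$, the condition that each rank-$r$ vertex has exactly $r+1$ neighbours in rank $r-1$. With the face poset reconstructed, $K$ itself is determined. The main obstacle I anticipate is bookkeeping: complexes with fewer than five simplices must be treated separately in the Whitney step, and one must verify that the orientation-fixing criterion singles out the correct rank function in every case, so that an abstract isomorphism $\M(K)\cong\M(L)$ is faithfully translated through the whole recovery procedure into an isomorphism $K\cong L$.
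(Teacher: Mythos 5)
Your first two steps are correct and are genuinely different from the paper's argument: since a closed $V$-path in a simplicial complex needs at least three regular pairs, two regular pairs are compatible exactly when they are disjoint, so $\M(K)^{(1)}$ is indeed the complement of the line graph of the undirected Hasse diagram $H(K)$, and Whitney's theorem (with the $K_3$/$K_{1,3}$ exception disposed of as you indicate) recovers $H(K)$ as an undirected graph. Note that if this plan went through it would prove something strictly stronger than Theorem A, namely that the $1$-skeleton of $\M(K)$ alone determines $K$; that should already make you suspicious that the remaining step cannot be a routine verification.

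The genuine gap is the final step. First, a connected graph all of whose edges are required to join consecutive levels can carry many level functions, not just one ``up to reversal'', so the assertion that the graded partition of $H(K)$ is determined up to reversal is unproved. Second, your tie-breaking criterion fails precisely in the delicate cases: for $K=C_n$ the Hasse diagram is the cycle $C_{2n}$, and \emph{both} proper $2$-colourings satisfy ``each level-$1$ vertex has exactly two level-$0$ neighbours''; for $K=\partial\Delta^m$ the face poset is self-dual, and under the reversed grading a simplex of reversed rank $r\geq 1$ has exactly $r+1$ lower neighbours (its cofacets), so the degree condition holds at every rank for both orderings. In these examples the conclusion survives only because the alternative grading happens to rebuild an isomorphic complex---but that is exactly the kind of statement you are trying to prove, not something you may assume. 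What your outline actually requires is the lemma that a finite connected simplicial complex is determined up to isomorphism by its undirected Hasse diagram graph; this may well be true, but you give no proof, and it is where essentially all the difficulty of the theorem lives. It is telling that the paper's own proof must treat separately exactly these self-symmetric configurations, $\partial\Delta^m$ in Proposition \ref{Proposition:IsomorphismRestrictsToOneSkeleton} and cycles in Corollary \ref{Corollary:MainForCycles}, which are the cases on which your orientation-fixing mechanism breaks down. Until that reconstruction lemma is established, the proposal does not prove Theorem A.
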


For the $1$-dimensional case, we prove that Theorem A also holds for multigraphs.

\begin{teoB}\label{Theorem:MainForMultigraphs} Let $G,G'$ be finite connected multigraphs. If $\M(G)$ is isomorphic to $\M(G')$ then $G$ is isomorphic to $G'$.\end{teoB}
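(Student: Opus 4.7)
The plan is to extend Theorem A to multigraphs by reading off from $\mathcal{M}(G)$ the combinatorial fingerprints left by loops and parallel edges, since these are the only features that distinguish a connected multigraph from a connected simple graph (which is itself a $1$-dimensional simplicial complex).

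My main observation would be at the level of the $1$-skeleton of $\mathcal{M}(G)$. Its vertices are incidences $(v,e)$ where $v$ is an endpoint of $e$; a non-loop edge contributes two such Morse vertices while a loop contributes only one. Two Morse vertices $(v_1,e_1)$, $(v_2,e_2)$ span a $1$-simplex of $\mathcal{M}(G)$ exactly when the two-element matching they determine is acyclic, and this fails precisely in three cases: $v_1=v_2$ (illegal matching, a vertex paired twice), $e_1=e_2$ (illegal matching, an edge paired twice), or $e_1,e_2$ are distinct parallel edges with common endpoints $\{v_1,v_2\}$ (the latter producing an unavoidable closed V-path of length two in the Forman sense). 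Consequently, loops are detectable as edges contributing a single Morse vertex, and parallel edges are detectable as ``extra'' non-adjacencies in the $1$-skeleton of $\mathcal{M}(G)$ that cannot be accounted for by a shared vertex or a shared edge.

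Using these signatures, I would extract from $\mathcal{M}(G)$ a subcomplex or quotient isomorphic to $\mathcal{M}(\overline G)$, where $\overline G$ is the connected simple graph underlying $G$ (loops deleted, parallel classes collapsed), and then apply Theorem A to recover $\overline G$. The remaining multi-edge and loop data can be read off by partitioning the Morse vertices of $\mathcal{M}(G)$ according to the edge/vertex of $\overline G$ they project to: the multiplicity of the parallel class over an edge $uv$ of $\overline G$ equals half the number of Morse vertices lying above $uv$, while the number of loops at a vertex $v$ equals the number of single-incidence Morse vertices attached to $v$. The main obstacle I foresee is producing $\mathcal{M}(\overline G)$ cleanly from $\mathcal{M}(G)$: there is no natural preferred representative within a parallel class, so a direct canonical quotient may not exist, and the three kinds of non-adjacency in the $1$-skeleton interact with higher simplices in intricate ways. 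I expect the proof routes through an intermediate simplicial model of $G$---for instance a subdivision---and applies Theorem A there once $\mathcal{M}(G)$ has been shown to determine $\mathcal{M}$ of that model.
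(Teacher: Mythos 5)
Your overall strategy is the same as the paper's (pass to the underlying simple graph, apply the simple-graph case, then recover edge multiplicities by counting incompatible Morse vertices), but the step you yourself flag as the obstacle is precisely the missing ingredient, and it is not a minor one. Your detection rule for parallelism --- ``extra non-adjacencies in the $1$-skeleton of $\M(G)$ that cannot be accounted for by a shared vertex or a shared edge'' --- is not intrinsic to the abstract complex $\M(G)$: a non-edge of $\M(G)$ is just a non-edge, and deciding whether it comes from a shared source vertex, a shared edge, or a pair of parallel edges presupposes the incidence structure of $G$, which is exactly what an isomorphism $\M(G)\rightarrow\M(G')$ is not allowed to see. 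Without an intrinsic characterization you cannot conclude that an isomorphism of Morse complexes descends to the simplifications, and your fallback (routing through a subdivision and showing $\M(G)$ determines $\M$ of that model) is left completely undeveloped --- it is not at all clear how one would prove that determination, and it is not the route the paper takes. A smaller point: the paper's multigraphs are by definition loop-free (they are the $1$-dimensional \emph{regular} CW-complexes), so the loop bookkeeping in your proposal is outside the scope of the statement.

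The paper closes exactly this gap with a link condition: for a connected multigraph with more than two vertices, two primitive Morse functions $(v,e),(v',e')$ are parallel (meaning $v=v'$ and $e$ parallel to $e'$) if and only if $(v,e)\nsim(v',e')$ and $lk((v,e),\M(G))=lk((v',e'),\M(G))$ (Lemma \ref{Lemma:ParallelCharacterization}). This characterization is purely internal to $\M(G)$, so the quotient of $\M(G)$ by the relation ``non-adjacent with equal links'' is canonical, every simplicial isomorphism descends to it (Proposition \ref{Proposition:QuotientIsomorphism}), and the quotient is identified with $\M(sG)$ (Remark \ref{Remark:SimplificationGraphs}); Theorem \ref{Theorem:MainForSimpleGraphs} then yields $sG\equiv sG'$ via $v\mapsto s(F(v,e))$. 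Finally, the multiplicities are recovered not by your ``half the Morse vertices above $uv$'' count (which again needs to be transported through the isomorphism) but by the intrinsic bijection $E_G(v,w)\leftrightarrow\{(v,a)\in\M(G)\,:\,(v,a)\nsim(w,e)\}$ for a fixed $e\in E_G(v,w)$, whose cardinality is preserved by $F$. So your signatures point in the right direction, but the proof as proposed is incomplete precisely at its load-bearing step.
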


We also exhibit an example which shows that the homotopy type of $\M(K)$ does not determine the homotopy type of $K$. 

The results in this article provide the complete answers to the foundational questions about $\M(K)$ raised by Chari and Joswig in \cite{ChJo}.

\section{The complex of discrete Morse functions}

All simplicial complexes that we deal with are assumed to be finite. We write $\sigma\prec\tau$ if the simplex $\sigma$ is an immediate face of $\tau$ (i.e. a proper maximal face) and we let $V_K$ denote the set of vertices of a complex $K$. We denote by $\Delta^n$ the standard complex consisting of all the faces of an $n$-simplex, and by $\partial\Delta^n$ its boundary (i.e. the complex of all the proper faces of the simplex).

A \emph{discrete Morse funcion} $f$ on an abstract simplicial complex $K$ is a map $f:K\rightarrow \mathbb{R}$ satisfying, for every $\sigma\in K$,\begin{enumerate}
\item\label{item:Property1DefinitionMorseFunction} $|\{\tau\succ\sigma\,|\,f(\tau)\leq f(\sigma)\}|\leq 1$ and
\item\label{item:Property2DefinitionMorseFunction} $|\{\nu\prec\sigma\,|\,f(\nu)\geq f(\sigma)\}|\leq 1$.
\end{enumerate}
Here $|A|$ denotes the cardinality of the set $A$. A simplex $\sigma$ such that both of these numbers are zero is called \emph{critical}. If $f(\eta)\geq f(\rho)$ for some $\eta\prec\rho$ then the pair $(\eta,\rho)$ is called a \emph{regular pair}. One can easily see that every simplex in $K$ is either critical or belongs to a unique regular pair (see \cite{For1, For2} for more details). If $(\sigma,\tau)$ is a regular pair, we call $\sigma$ the \emph{source simplex} of the pair, and write $s(\sigma,\tau)=\sigma$, and we call $\tau$ the \emph{target simplex} of the pair, and  write $t(\sigma,\tau)=\tau$. Typically, a regular pair $(\sigma,\tau)$ is depicted graphically as an arrow from $\sigma$ to $\tau$ (see Figure \ref{Figure:ExamplesOfArrowsInFMD}).

\begin{figure}[h]
\centering
\includegraphics[scale=0.7]{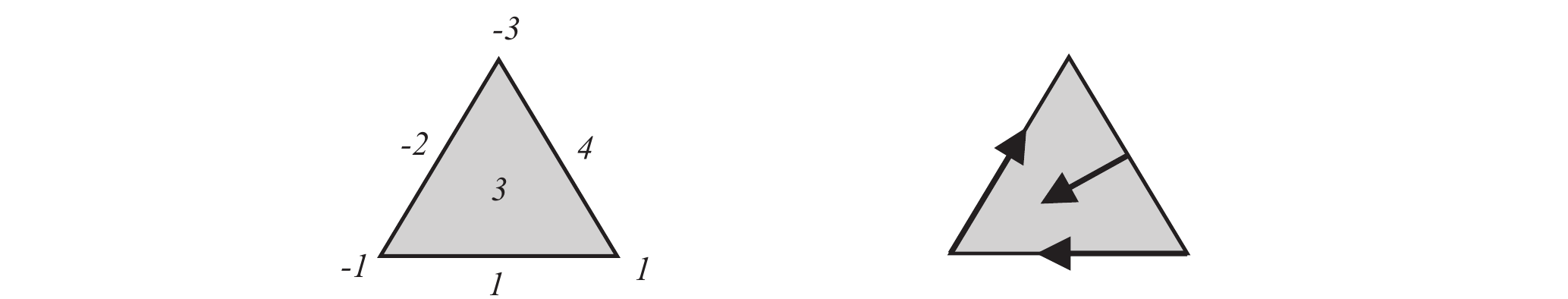}
\caption{Graphical representation of regular pairs.}
\label{Figure:ExamplesOfArrowsInFMD}
\end{figure}

The index of a regular pair $(\sigma,\tau)$ is the dimension of $\sigma$. A regular pair of index $k$ will be sometimes denoted by $(\sigma^k,\tau^{k+1})$. Given two discrete Morse functions $f,g$ on $K$ we write $f\lesssim g$ if every regular pair of $f$ is also a regular pair of $g$. Following \cite{ChJo}, if $f\lesssim g$ and $g\lesssim f$ (i.e. both functions have the same regular pairs) then we say that they are \emph{equivalent}.
We will make no distinction between equivalent Morse functions, i.e. we will work with classes of discrete Morse functions under this equivalence relation.

A discrete Morse function with exactly one regular pair is called a \emph{primitive Morse function}. We will often identify a primitive Morse function with its sole regular pair. A collection $f_0,\ldots,f_r$ of primitive Morse functions is said to be \emph{compatible} if there exists a discrete Morse function $f$ on $K$ with $f_i\lesssim f$ for every $i=0,\ldots,r$. The \emph{complex of discrete Morse functions} of $K$ is the simplicial complex $\M(K)$ whose vertices are the primitive Morse functions on $K$ and whose $r$-simplices are the discrete Morse functions with $r+1$ regular pairs. We identify in this way a discrete Morse function $f$ with the set $\{f_0,\ldots,f_r\}$ of all primitive Morse functions satisfying $f_i\lesssim f$ (i.e. the set of its regular pairs). $\M(K)$ is also called the \emph{discrete Morse complex} of $K$. Figure \ref{Figure:ExamplesOfComplexOfFMD} shows some low-dimensional examples of discrete Morse complexes.


\begin{figure}[h]
\centering
\includegraphics[scale=0.6]{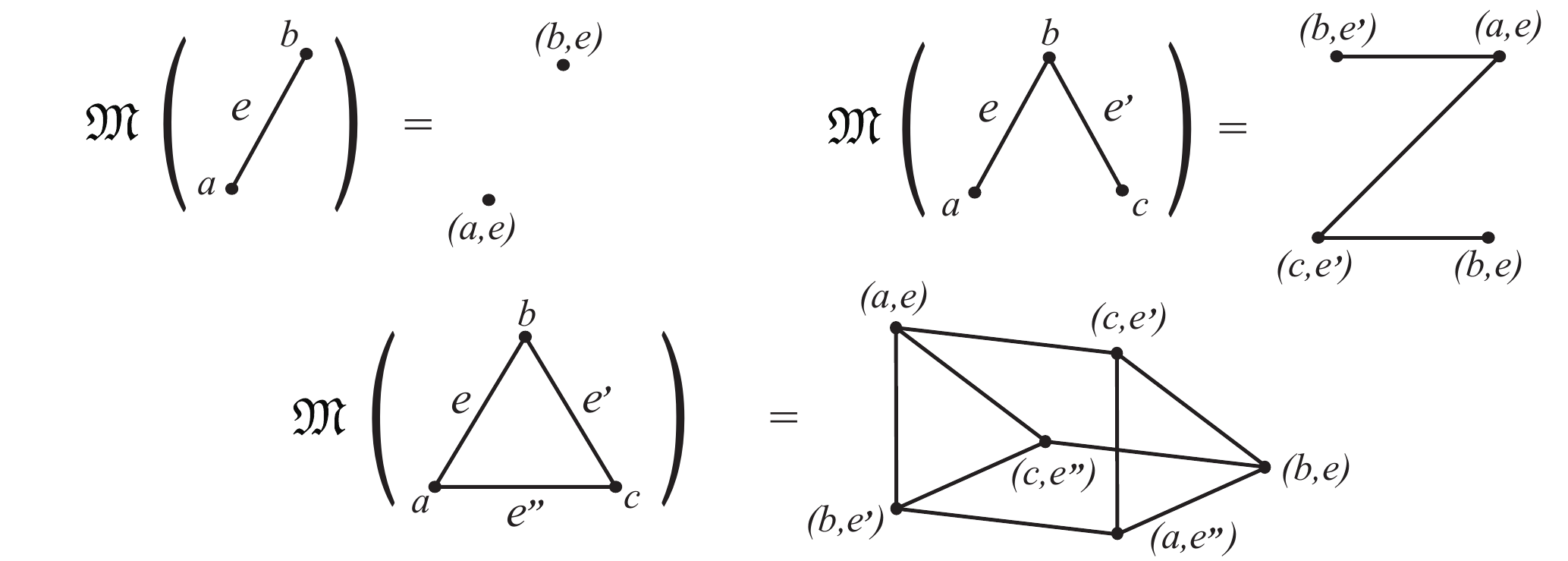}
\caption{Examples of complexes of discrete Morse functions.}
\label{Figure:ExamplesOfComplexOfFMD}
\end{figure}

There is an alternative approach to discrete Morse theory due to Chari \cite{Cha} where the deformations are encoded in terms of acyclic matchings in the Hasse diagram of the face poset of the simplicial complex. It is not hard to see that the pairing of simplices which form regular pairs of a discrete Morse function determines a matching in the Hasse diagram $\mathcal{H}_K$ of $K$. If the arrows in this matching are reversed, it can be easily shown that the resulting directed graph is acyclic. On the other hand, from an acyclic matching on the Hasse diagram of a simplicial complex one can build a discrete Morse function $f$ on $K$ where the regular pairs of $f$ are precisely the edges of the matching. From this viewpoint, $\M(K)$ is the simplicial complex on the edges of the Hasse diagram of $K$ whose simplices are the subsets of edges which form acyclic matchings.


\section{The complexes associated to graphs}

The complex of discrete Morse functions has been studied almost exclusively for graphs, as the construction of $\M(K)$ for a general $K$ is rather complicated (see for example \cite{AyFeQuVi1,ChJo}). We focus first on this case and settle the main result for $1$-dimensional regular CW-complexes (Theorem B).

Recall that a \emph{multigraph} $G$ is a triple $(V_G,E_G,f_G)$ where $V_G$ is a (finite) set of vertices, $E_G$ is a set of edges and $f_G:E_G\rightarrow\{\{u,v\}\,:\,u,v\in V_G\text{ and }u\neq v\}$ is a map which assigns to each edge its boundary vertices. If $f_G(e)=f_G(e')$ for $e,e'\in E_G$, we say that $e,e'$ are \emph{parallel edges}. For $v,v'\in V_G$, $E_G(v,v')$ will stand for the set of parallel edges between $v$ and $v'$. Note that, by definition, a multigraph has no loops. \emph{Simple} graphs correspond to multigraphs $G$ where $f_G$ is injective. In this case we shall identify an edge with its boundary vertices and write $e=vw$ if $f_G(e)=\{v,w\}$. Note that simple graphs are precisely the $1$-dimensional simplicial complexes and multigraphs are precisely the $1$-dimensional regular CW-complexes (see \cite{LuWe} for the necessary definitions).

The complex of discrete Morse functions of a graph was first studied by Kozlov \cite{Koz1} under a different context. Given a directed graph $G$, Kozlov defined the simplicial complex $\Delta(G)$ whose vertices are the edges of $G$ and whose faces are all directed forests which are subgraphs of $G$. In \cite{Koz1} he studied the shellability of the complete double-directed graph on $n$ vertices (a graph having exactly one edge in each direction between any pair of vertices) and computed the homotopy type of the double-directed $n$-cycle and the double-directed $n$-path. It is not hard to see that for any (undirected) graph $G$, the identity $\M(G)=\Delta(d(G))$ holds, where $d(G)$ is the directed graph on the vertices of $G$ with one edge in each direction between adjacent vertices of $G$. The aforementioned examples studied by Kozlov correspond respectively to the complex of Morse functions of the complete graph, the $n$-cycle and the $n$-path. Complexes of directed graphs have been widely studied (see for example \cite{BjWe,Eng,Joj,Koz1}) and some results of this theory were used in Babson and Kozlov's proof of the Lov\'asz conjecture (see \cite{BaKo}).


In this section we prove Theorem B, which is the special case of Theorem A for regular $1$-dimensional CW-complexes. The definition of the complex of Morse functions for regular CW-complexes is identical to the simplicial case. In particular, for a multigraph $G$, $\M(G)$ can be viewed as the simplicial complex with one vertex for each directed edge in $G$ and whose simplices are the collections of directed edges which do not form directed cycles.

We first establish the result for simple graphs (i.e. the $1$-dimensional case of Theorem A) and then extend it to general multigraphs. We begin by collecting some basic facts about the discrete Morse complex of simple graphs. 

Given two simplicial complexes $K, L$, we denote $K\equiv L$ if they are isomorphic. 

\begin{lema} Let $G$ be a connected simple graph. Then,\begin{enumerate}
\item $|V_{\M(G)}|=2|E_G|$.
\item $\dim(\M(G))=|V_G|-2$.
\end{enumerate}\end{lema}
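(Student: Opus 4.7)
For (1), the plan is straightforward. A primitive Morse function on $G$ is identified with its unique regular pair $(\sigma,\tau)$, and since $G$ is $1$-dimensional this pair must consist of a vertex $v$ and an edge $e$ with $v\prec e$. Any single such incidence $(v,e)$ automatically satisfies the two conditions in the definition of a discrete Morse function, so the vertices of $\M(G)$ are in bijection with the Hasse edges of $\mathcal{H}_G$. Each $e\in E_G$ contributes exactly two such incidences (its two endpoints), giving $|V_{\M(G)}|=2|E_G|$.

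For (2), I would work in the acyclic-matching/directed-graph viewpoint recalled in Sections 2 and 3, where $\M(G)=\Delta(d(G))$. An $r$-simplex of $\M(G)$ is an acyclic matching $M$ in $\mathcal{H}_G$ with $|M|=r+1$; encoding each pair $(v,e)\in M$ with $e=vw$ as the directed edge $v\to w$ of $d(G)$, one obtains a subgraph $D_M\subseteq d(G)$ whose vertices have out-degree at most $1$ (from the matching condition, since each vertex of $G$ can be the source of at most one pair) and which contains no directed cycle (from acyclicity). A digraph with both properties is a disjoint union of in-trees, i.e.\ a rooted forest; such a forest on $|V_G|$ vertices has at most $|V_G|-1$ edges. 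This yields the upper bound $\dim(\M(G))\leq|V_G|-2$.

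For the matching lower bound I would exhibit a Morse function with exactly $|V_G|-1$ regular pairs, using connectedness of $G$. Pick a spanning tree $T\subseteq G$ and a root $v_0\in V_G$, and for each non-root vertex $v$ match $v$ with the first edge of the unique $T$-path from $v$ to $v_0$. By construction each vertex of $G$ is used as a source at most once and each edge of $T$ is used as a target at most once, so this is a matching of size $|V_G|-1$; and acyclicity follows because along any sequence of matched arrows the $T$-distance to $v_0$ strictly decreases, ruling out cycles. This realises the bound, giving $\dim(\M(G))=|V_G|-2$. There is no real obstacle here: the whole proof consists in recognising the rooted-forest structure forced by ``matching plus acyclicity'' on the one hand, and the standard spanning-tree construction on the other.
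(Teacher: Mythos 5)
Your argument is correct, but it follows a genuinely different route from the paper. For part (2) the paper argues by induction on $|E_G|$: it first treats trees, invoking Forman's collapsibility result (\cite[Lemma 4.3]{For1}) to produce a Morse function in which every edge is regular, and then, for a graph with a cycle, uses the non-existence of closed $V$-paths (\cite[Theorem 9.3]{For1}) to find an edge of the cycle that is non-regular for a top-dimensional Morse function, deletes it, and applies the inductive hypothesis to the smaller connected graph. You instead work entirely in the directed-graph model $\M(G)=\Delta(d(G))$: the upper bound $\dim\M(G)\leq |V_G|-2$ comes from observing that ``out-degree at most one plus no directed cycles'' forces the set of arrows to be a rooted forest, hence at most $|V_G|-1$ arrows (the short argument that such a digraph has no underlying undirected cycle is standard and worth a line, but poses no difficulty), and the lower bound comes from the explicit acyclic matching given by orienting a spanning tree toward a root. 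Both proofs are valid; the paper's is shorter given the cited results of Forman and matches the inductive style used elsewhere in the article, while yours is self-contained, avoids both the induction and the collapsibility citation, and yields the sharper structural fact that the facets of $\M(G)$ are exactly the rooted spanning forests (here trees), which is essentially the description of the pure Morse complex in \cite{AyFeQuVi1}. Part (1) is immediate in both treatments.
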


\begin{proof} If $G$ is a tree then it is collapsible and there exists a discrete Morse function $f\in\M(G)$ for which all the edges of $G$ are regular (see \cite[Lemma 4.3]{For1}). Hence, $\dim(\M(G))=|E_G|-1=|V_G|-2$. For the general case, proceed by induction on $n=|E_G|$. If $G$ is not a tree, let $f\in\M(G)$ be of maximal dimension and let $e_0,\ldots,e_r$ be a cycle in $G$. There must be an edge $e_i$ which is not regular for $f$ (see \cite[Theorem 9.3]{For1}). Let $G'=G-\{e_i\}$. $G'$ is still connected because $e_i$ is in a cycle, $|E_{G'}|=|E_G|-1$ and, by induction, $\dim(\M(G'))=|V_{G'}|-2=|V_G|-2$. Since $f\in\M(G')$ and $\dim(\M(G'))\leq\dim(\M(G))$, then $\dim(\M(G))=|V_G|-2$.\end{proof}

\begin{coro} If $G,G'$ are connected simple graphs such that $\M(G)\equiv\M(G')$ then $|V_G|=|V_{G'}|$ and $|E_G|=|E_{G'}|$. In particular their fundamental groups $\pi_1(G)$ and $\pi_1(G')$ are isomorphic.\end{coro}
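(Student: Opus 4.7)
The plan is essentially immediate from the preceding lemma, since both the number of vertices and the dimension are isomorphism invariants of a simplicial complex. First I would observe that if $\M(G)\equiv\M(G')$ then in particular $|V_{\M(G)}|=|V_{\M(G')}|$ and $\dim(\M(G))=\dim(\M(G'))$. Applying part (1) of the lemma to both sides gives $2|E_G|=2|E_{G'}|$, hence $|E_G|=|E_{G'}|$. Applying part (2) of the lemma to both sides gives $|V_G|-2=|V_{G'}|-2$, hence $|V_G|=|V_{G'}|$.

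For the fundamental group statement, I would invoke the standard fact that a finite connected graph is homotopy equivalent to a wedge of circles, and that the number of circles equals the first Betti number $|E_G|-|V_G|+1$. Since $G$ and $G'$ are both connected and share both $|V|$ and $|E|$, their Betti numbers coincide, so $\pi_1(G)$ and $\pi_1(G')$ are free groups of the same rank, hence isomorphic.

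There is essentially no obstacle here: the entire content of the corollary is extracting numerical invariants ($|V_{\M(G)}|$, $\dim\M(G)$) from an isomorphism of complexes and substituting into the formulas of the lemma. The only thing worth double-checking is that both graphs being connected is needed in order to invoke the lemma (which was stated for connected simple graphs) and to use the formula for $\pi_1$.
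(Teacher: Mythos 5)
Your argument is correct and is exactly the intended one: the corollary follows immediately from the preceding lemma by comparing the isomorphism invariants $|V_{\M(G)}|$ and $\dim\M(G)$, and the $\pi_1$ statement follows since a connected graph is a wedge of $|E_G|-|V_G|+1$ circles. This matches the paper, which leaves the proof implicit for precisely these reasons.
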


\begin{obs}\label{Obs:compatibility} It is easy to check that a vertex $v\in G$ is a leaf if and only if the vertex $(v,e)\in V_{\M(G)}$ is compatible with every other $(u,e')\in V_{\M(G)}$ with the unique exception of $(w,e)$, where $w$ is the other vertex of the edge $e$. This happens if and only if $\deg(v,e)=2|E_G|-2$, where $\deg(v,e)$ is the degree of the vertex $(v,e)$ in the $1$-skeleton $\M(G)^{(1)}$ (i.e. the subcomplex of $\M(G)$ consisting of the simplices of dimension $\leq 1$). In particular, if $\M(G)\equiv\M(G')$ then $G$ and $G'$ have the same number of leaves.\end{obs}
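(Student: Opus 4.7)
The plan is to characterize the non-edges of $\M(G)^{(1)}$ incident to a given vertex $(v,e)$ and then to count them.

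First, I would invoke the acyclic matching viewpoint recalled in Section~2: two primitive Morse functions $(v,e)$ and $(u,e')$ are compatible if and only if $\{(v,e),(u,e')\}$ is an acyclic matching in the Hasse diagram $\mathcal{H}_G$. There are only two ways this can fail. Either (a) the matching condition itself fails, i.e.\ $v=u$ or $e=e'$; or (b) reversing the two matched arrows creates a directed cycle in $\mathcal{H}_G$. Since $\mathcal{H}_G$ is bipartite (vertex layer and edge layer) every cycle has even length, and with only two matched pairs the sole possibility is a $4$-cycle $v\to e'\to u\to e\to v$, which forces $v,u\in e\cap e'$, i.e.\ $e$ and $e'$ are parallel. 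As $G$ is simple, case~(b) never occurs.

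Therefore the vertices of $\M(G)$ incompatible with $(v,e)$ are exactly those of type (a): the unique vertex $(w,e)$, where $w$ is the other endpoint of $e$, together with one vertex $(v,e')$ for each other edge $e'$ incident to $v$. This gives a total of $1+(\deg_G(v)-1)=\deg_G(v)$ incompatibilities. Since $|V_{\M(G)}|=2|E_G|$ by the preceding lemma, I obtain
\[
\deg(v,e)\;=\;2|E_G|-1-\deg_G(v),
\]
which equals $2|E_G|-2$ precisely when $\deg_G(v)=1$. Both equivalences in the remark follow immediately.

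For the final assertion, any isomorphism $\M(G)\equiv\M(G')$ preserves vertex degrees in the $1$-skeleton, and the preceding corollary guarantees $|E_G|=|E_{G'}|$, so the threshold $2|E_G|-2$ is the same on both sides. Each leaf $v$ of $G$ is the source vertex of exactly one primitive Morse function (the pair $(v,e)$ with $e$ its only incident edge), so the leaves of $G$ biject with the vertices of $\M(G)^{(1)}$ of degree $2|E_G|-2$; hence the number of leaves is preserved by the isomorphism. The only place any real care is required is the acyclicity analysis in the first step, but once one uses the bipartite structure of $\mathcal{H}_G$ to rule out longer cycles with two matched pairs, there is no further obstacle.
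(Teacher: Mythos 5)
Your argument is correct and is exactly the routine verification the paper leaves implicit (the remark is stated with ``it is easy to check'' and no proof is given): in a simple graph the only incompatibilities with $(v,e)$ are $(w,e)$ and the pairs $(v,e')$ with $e'\neq e$ incident to $v$, giving $\deg(v,e)=2|E_G|-1-\deg_G(v)$, and the leaf count is then an isomorphism invariant. One trivial slip: the potential directed cycle from the two matched pairs $(v,e)$, $(u,e')$ should read $v\to e\to u\to e'\to v$ rather than $v\to e'\to u\to e\to v$, but the conclusion that it forces $e$ and $e'$ to be parallel, hence impossible in a simple graph, is unaffected.
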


Let $C_n$ denote the simple cycle with $n$ vertices.

\begin{coro}\label{Corollary:MainForCycles} Let $G,G'$ be two connected simple graphs. If $\M(G)\equiv\M(G')$ and $G=C_n$ then $G'=C_n$.\end{coro}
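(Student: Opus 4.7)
The plan is to combine the two preceding results (the lemma on vertex and edge counts and Remark 2.3 on leaves) with an elementary degree-counting argument. First I would apply the lemma together with its corollary to conclude that $|V_{G'}| = |V_{C_n}| = n$ and $|E_{G'}| = |E_{C_n}| = n$. So $G'$ is a connected simple graph on $n$ vertices with exactly $n$ edges.

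Next I would invoke Remark 2.3: since the number of leaves of a simple graph is detected by $\M(G)$ (via the maximum degree $2|E_G|-2$ in the $1$-skeleton $\M(G)^{(1)}$, or equivalently via the compatibility criterion), and $C_n$ has no leaves, the graph $G'$ has no leaf either. Therefore every vertex of $G'$ has degree at least $2$.

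Now the handshake lemma gives $\sum_{v \in V_{G'}} \deg(v) = 2|E_{G'}| = 2n$. Combined with $|V_{G'}|=n$ and $\deg(v)\geq 2$ for every $v$, this forces $\deg(v) = 2$ for all $v \in V_{G'}$. A connected $2$-regular simple graph on $n$ vertices is the cycle $C_n$, so $G' = C_n$, as required.

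There is essentially no obstacle here; the corollary is a direct bookkeeping consequence of the two facts already established. The only point worth being careful about is confirming that the characterization of leaves in Remark 2.3 really is an isomorphism invariant of $\M(G)$ (which it is, being phrased purely in terms of vertex degrees in $\M(G)^{(1)}$), and that simplicity of $G'$ is part of the hypothesis so we may pass from ``$2$-regular'' directly to ``$C_n$'' without worrying about multiple edges.
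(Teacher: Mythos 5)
Your proposal is correct and follows the same route as the paper: equal vertex and edge counts from the preceding lemma, absence of leaves from Remark 2.3, and then the conclusion that a connected leafless graph with $|V_{G'}|=|E_{G'}|$ must be $C_n$. The paper leaves the final degree-counting step implicit, whereas you spell it out via the handshake lemma, but the argument is essentially identical.
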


\begin{proof} By a previous result, $|V_G|=|V_{G'}|$ and $|E_G|=|E_{G'}|$. Since $G=C_n$ then $|V_G|=|E_G|$ and therefore $|V_{G'}|=|E_{G'}|$. Also, since $G$ has no leaves then $G'$ has no leaves. Therefore, $G'=C_n$.\end{proof}

In order to prove the main results of this paper we will analyze compatibility of regular pairs, similarly as we did in Remark \ref{Obs:compatibility}. From now on, we write $(\sigma,\tau)\sim (\eta,\rho)$ if $(\sigma,\tau)$ and $(\eta,\rho)$ are compatible as primitive Morse functions (i.e. if they form a simplex in $\M(K)$), and $(\sigma,\tau)\nsim (\eta,\rho)$ whenever they are not.

\begin{teo}\label{Theorem:MainForSimpleGraphs} Let $G,G'\neq C_n$ be connected simple graphs and let $F:\M(G)\rightarrow\M(G')$ be a simplicial isomorphism. Define a mapping $f:G\rightarrow G'$ by $f(v)=s(F(v,e))$, where $e$ is any edge incident to $v$. Then $f$ is a well-defined simplicial isomorphism.\end{teo}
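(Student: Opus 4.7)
The key observation is that $V_{\M(G)}$ carries two natural partitions: the \emph{source partition} $\{S_v\}_{v\in V_G}$ with $S_v=\{(v,e):e\ni v\}$, and the \emph{target partition} $\{T_e\}_{e\in E_G}$ with $T_e=\{(v,e):v\in e\}$. For simple graphs one checks that $(v,e)\nsim(u,e')$ if and only if $u=v$ or $e=e'$; hence the maximal pairwise incompatible subsets of $V_{\M(G)}$ are exactly the sets $S_v$ with $\deg_G v\ge 2$ together with all the $T_e$. Since $F$ is a simplicial isomorphism it preserves this family, so for each $v\in V_G$ the image $F(S_v)$ is either some $S_{v'}$ (the ``good case'') or some $T_{e'}$ (the ``bad case''). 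Well-definedness of $f$ is equivalent to the good case occurring at every vertex.

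If $\deg v\ge 3$, the bad case is impossible because $|F(S_v)|\ge 3$ while every $T$-clique has size two. If $\deg v=1$, then $(v,e)$ lies in a \emph{unique} maximal pairwise incompatible subset (namely $T_e$), and this uniqueness is preserved by $F$, so $s(F(v,e))$ is forced to be a leaf of $G'$ and well-definedness is trivial. The difficult case is $\deg v=2$, where $S_v$ and the $T$-cliques have the same cardinality two; the plan is to show that the bad case at any degree-$2$ vertex propagates through $G$ until it forces $G=C_n$.

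Concretely, suppose $\deg v=2$ with neighbors $u_1,u_2$ along edges $e_i=vu_i$, and $F(S_v)=T_{e'}$ with $e'=a'b'$, arranged so that $F(v,e_1)=(a',e')$ and $F(v,e_2)=(b',e')$. Examining $F(T_{e_1})$: the vertex $F(u_1,e_1)$ is incompatible with $(a',e')$ and cannot equal $F(v,e_2)=(b',e')$ by the injectivity of $F$, so it must be of the form $(a',f')$ for some edge $f'\ne e'$ at $a'$. Therefore $F(T_{e_1})=S_{a'}$ and $\deg_{G'}a'=2$. Pulling the clique $T_{f'}$ back through $F^{-1}$ and running the symmetric analysis yields $\deg_G u_1=2$ and $F(S_{u_1})=T_{f'}$, so the bad case also occurs at $u_1$. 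Iterating this propagation through the connected graph $G$ forces every vertex to have degree $2$, so $G=C_n$, contradicting the hypothesis. This propagation step is the main technical obstacle, and is precisely why cycles must be excluded from the statement.

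Once $f$ is well-defined the identity $F(S_v)=S_{f(v)}$ holds for every $v\in V_G$. Injectivity of $f$ then follows from the bijectivity of $F$ on $S$-cliques, and surjectivity follows from the equality $|V_G|=|V_{G'}|$ (or equivalently by applying the same construction to $F^{-1}$, using Corollary \ref{Corollary:MainForCycles} to ensure $G'\ne C_n$). For edge preservation, let $e=v_1v_2\in E_G$: the pair $(v_1,e),(v_2,e)$ is incompatible, and since $f(v_1)\ne f(v_2)$ by injectivity, the incompatibility of $F(v_1,e)$ and $F(v_2,e)$ must come from a shared second coordinate $g\in E_{G'}$. This edge $g$ has both $f(v_1)$ and $f(v_2)$ as endpoints, so $g=f(v_1)f(v_2)\in E_{G'}$. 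The same argument applied to $F^{-1}$ shows $f^{-1}$ also preserves edges, completing the proof that $f$ is a simplicial isomorphism.
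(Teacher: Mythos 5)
Your proof is correct, and its engine is the same as the paper's: the failure of well-definedness at a vertex propagates through the graph via the isomorphism, and the hypothesis $G\neq C_n$ yields a contradiction. What you do differently is the bookkeeping and the endgame. You first classify the maximal pairwise-incompatible subsets of $V_{\M(G)}$ as the source cliques $S_v$ ($\deg v\geq 2$) and the target cliques $T_e$, observe that a simplicial isomorphism preserves this family (it only depends on the $1$-skeleton of $\M(G)$), kill the degree $\geq 3$ bad case by cardinality, and show that a bad degree-$2$ vertex forces its neighbours to be bad degree-$2$ vertices, so connectedness gives a $2$-regular graph, i.e.\ $G=C_n$. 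The paper instead runs the same local propagation to build a path of bad vertices, argues by finiteness that it closes into a cycle, and then uses connectedness and $G\neq C_n$ to locate a vertex of degree $\geq 3$ at which badness holds, contradicting simplicity of $G'$ via a third incident edge; your version avoids that case analysis on where the path closes, and the identity $F(S_v)=S_{f(v)}$ (which for leaves needs your uniqueness-of-maximal-clique remark, so keep that explicit) also gives you injectivity first and lets you settle edge-preservation directly, whereas the paper rules out the shared-source alternative by rerunning the well-definedness argument for $s\circ F^{-1}$. Two cosmetic points: the citation of Corollary \ref{Corollary:MainForCycles} is superfluous since $G'\neq C_n$ is already hypothesized, and the equality $|V_G|=|V_{G'}|$ you invoke for surjectivity is the unnamed corollary to the dimension lemma, not the cycle corollary.
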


\begin{proof} The key part of the proof is to see that $f$ is well-defined, i.e. that $f(v)$ does not depend on the choice of the incident edge $e$. Suppose otherwise and let $(v,e_0),(v,e_1)\in V_{\M(K)}$ be such that $F(v,e_0)=(w,a)$ and $F(v,e_1)=(w',b)$ with $w\neq w'$. Since $(v,e_0)\nsim (v,e_1)$ then $(w,a)\nsim(w',b)$ and hence $a=b$ (see Figure \ref{Figure:InitialSituation}).

\begin{figure}[h]
\centering
\includegraphics[scale=0.7]{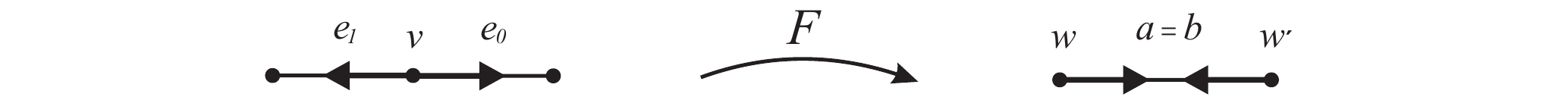}
\caption{}
\label{Figure:InitialSituation}
\end{figure}

We claim that under this situation we can choose such a vertex $v$ of $G$ with degree greater than or equal to $3$. This will lead to a contradiction since an edge containing $v$ different from $e_0$ and $e_1$ provides a primitive Morse function on $G$ which is incompatible with both $(v,e_0)$ and $(v,e_1)$, while the simplicity of $G'$ implies that there is no possible primitive Morse function on $G'$ incompatible with both $(w,a)$ and $(w',a)$. To prove this claim, let $e_1=vv'$ and consider the primitive Morse function $(v',e_1)$. Since $(w',a)=F(v,e_1)\nsim F(v',e_1)$ and $F$ is an isomorphism then there exists and edge $c=w'w''\in G'$  such that $F(v',e_1)=(w',c)$. Consider now $(w'',c)\in\M(G')$. Using a similar argument for $F^{-1}$ and $(w'',c)$ one can find  an edge $e_2\neq e_0,e_1$ such that $F^{-1}(w'',c)=(v',e_2)$ (see Figure \ref{Figure:InductiveSituation}).

\begin{figure}[h]
\centering
\includegraphics[scale=0.7]{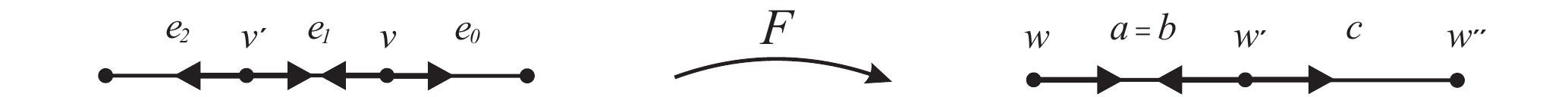}
\caption{}
\label{Figure:InductiveSituation}
\end{figure}

Note that the primitive Morse functions $(v',e_1), (v',e_2)$ satisfy the same hypotheses than $(v,e_0),(v,e_1)$ (but replacing $(w,a),(w',a)$ with $(w',c),(w'',c)$ respectively). Repeating this argument we obtain a path $e_1,e_2,e_3,\ldots$ where, for any vertex $v\in e_i\cap e_{i+1}$, $(v,e_i),(v,e_{i+1})$ are mapped to primitive Morse functions on $G'$ of the form $(u,d),(u',d)$ with $u\neq u'$. By finiteness, this path must form a cycle $C=\{e_j,e_{j+1},\ldots,e_{j+k-1},e_{j+k}=e_j\}$ for some $j,k$. If $j=0$, and since $G$ is not a cycle, there is by connectedness an edge $e\notin C$ intersecting $C$. In this case, $x= e\cap C$ is the desired vertex (see Figure \ref{Figure:CycleSituation} ($a$)). If $j>0$ then the vertex $y=e_{j-1}\cap e_j$ is the desired vertex (see Figure \ref{Figure:CycleSituation} ($b$)). This proves that $f$ is well-defined.
\begin{figure}[h]
\centering
\includegraphics[scale=0.7]{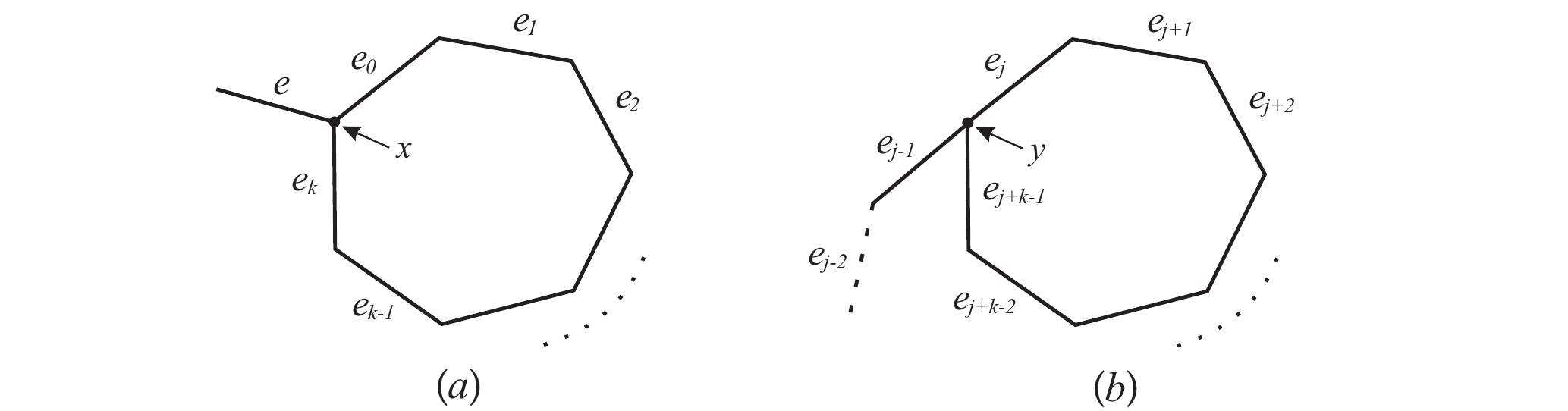}
\caption{}
\label{Figure:CycleSituation}
\end{figure}

We show now that $f$ is a simplicial morphism. Consider an edge $e=vv'\in G$. We must see that $f(v)f(v')\in G'$. Since $(v,e)\nsim(v',e)$ then $F(v,e)\nsim F(v',e)$. Therefore, either $s(F(v,e))=s(F(v',e))$ or $t(F(v,e))=t(F(v',e))$. In the first case, the same reasoning as above applied to $h=s\circ F^{-1}:G'\rightarrow G$ gives a contradiction. Therefore, $t(F(v,e))=t(F(v',e))$ and, in particular, $f(v)f(v')\in t(F(v,e))$ is an edge in $G'$.

Finally, it is easy to see that $f^{-1}=s\circ F^{-1}$ is the inverse of $f$.\end{proof}

\begin{coro}\label{Corollary:MainTheoremForSimpleGraphs} Let $G,G'$ be connected simple graphs. If $\M(G)\equiv\M(G')$ then $G\equiv G'$.\end{coro}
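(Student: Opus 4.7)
The plan is to derive Corollary \ref{Corollary:MainTheoremForSimpleGraphs} as an immediate consequence of the two results just proved, by splitting into two cases according to whether $G$ is a cycle or not.

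First I would dispose of the cyclic case. If $G = C_n$ for some $n$, then Corollary \ref{Corollary:MainForCycles} applied to the given isomorphism $\M(G) \equiv \M(G')$ yields $G' = C_n$, so $G \equiv G'$ trivially. This also handles the case where $G' = C_n$, by symmetry: applying Corollary \ref{Corollary:MainForCycles} with the roles of $G$ and $G'$ exchanged (which is legitimate because the hypothesis $\M(G) \equiv \M(G')$ is symmetric) forces $G = C_n$ as well, again giving $G \equiv G'$.

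It remains to treat the case where neither $G$ nor $G'$ is a cycle. Here I would simply invoke Theorem \ref{Theorem:MainForSimpleGraphs}: fixing any simplicial isomorphism $F:\M(G)\rightarrow\M(G')$, the theorem produces a simplicial isomorphism $f:G\to G'$ defined by $f(v)=s(F(v,e))$, which is exactly the isomorphism we need.

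Since the two cases are mutually exhaustive and neither requires more than citing the previous results, there is no real obstacle — all the substantive combinatorial work has already been carried out in the proof of Theorem \ref{Theorem:MainForSimpleGraphs} (the path/cycle construction used to locate a vertex of degree $\geq 3$) and in the counting argument of Corollary \ref{Corollary:MainForCycles}. The only subtlety worth flagging is the need to rule out the asymmetric possibility ``$G$ not a cycle, $G'$ a cycle,'' which is handled by the symmetry remark above before Theorem \ref{Theorem:MainForSimpleGraphs} is invoked.
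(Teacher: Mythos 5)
Your proof is correct and is essentially the paper's own argument: the paper's proof of this corollary is precisely the combination of Corollary \ref{Corollary:MainForCycles} (to dispose of the case where $G$ or, by symmetry, $G'$ is a cycle) with Theorem \ref{Theorem:MainForSimpleGraphs} (for the remaining case), exactly as you describe.
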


\begin{proof} Follows from Corollary \ref{Corollary:MainForCycles} and Theorem \ref{Theorem:MainForSimpleGraphs}.\end{proof}

We now extend the result to multigraphs. Two primitive Morse functions $(v,e),(v',e')\in\M(G)$ are said to be \emph{parallel} if $v=v'$ and $e$ is parallel to $e'$ in $G$. Recall that the \emph{link} of a simplex $\sigma\in K$ is the subcomplex $lk(\sigma,K)=\{\tau\in K:\ \tau\cap\sigma=\emptyset,\ \tau\cup\sigma\in K\}$. 

\begin{lema}\label{Lemma:ParallelCharacterization} Let $G$ be a connected multigraph with more than two vertices. Then two primitive Morse functions $(v,e),(v',e')$ are parallel in $\M(G)$ if and only if $(v,e)\nsim (v',e')$ and $lk((v,e),\M(G))=lk((v',e'),\M(G))$.\end{lema}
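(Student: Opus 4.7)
The plan is to reformulate both sides of the equivalence in terms of which primitive Morse functions are compatible with a given one, and then run a short case analysis. Recall that, viewing primitive Morse functions as directed arrows $v\to e$ in the Hasse diagram of $G$, a set of such arrows forms a simplex of $\M(G)$ if and only if it is a matching (distinct sources, distinct targets) that contains no directed cycle. In dimension one this pins down $(v,e)\nsim(v',e')$ exactly when $v=v'$, or $e=e'$, or $e$ and $e'$ are parallel with $\{v,v'\}$ their two shared endpoints (a ``$2$-cycle''). This characterisation is the backbone of both directions.

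For the forward implication, if $(v,e)$ and $(v,e')$ are parallel they share the source $v$, so incompatibility is free; the content is the equality of links. Given $\tau\in lk((v,e),\M(G))$, I plan to verify $\tau\cup\{(v,e')\}\in\M(G)$. Pairwise compatibility of each $(u,a)\in\tau$ with $(v,e')$ follows from its compatibility with $(v,e)$ because $e$ and $e'$ have identical endpoint sets, so the characterisation above returns the same verdict. The real work is acyclicity: a directed cycle in $\tau\cup\{(v,e')\}$ would necessarily use the arrow $v\to e'$, and replacing that arrow by $v\to e$ should yield a cycle in $\tau\cup\{(v,e)\}$. The substitution is legitimate because the matching condition on $\tau\cup\{(v,e)\}$ forbids the edge $e$ from appearing anywhere in $\tau$, so the substituted sequence still has distinct edges, contradicting the assumed acyclicity. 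Symmetry closes the direction.

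For the reverse implication, I would assume $(v,e)\nsim(v',e')$ without the parallel condition and exhibit a primitive Morse function lying in exactly one of the two links. I split along the three regimes of incompatibility. Case (a): $v=v'$ with $e,e'$ nonparallel. Let $w$ be the endpoint of $e$ different from $v$; then $(w,e)$ is incompatible with $(v,e)$ (same edge) while compatible with $(v,e')$, since $e$ and $e'$ have distinct endpoint sets so no $2$-cycle forms. Case (b): $e=e'$ with $v\neq v'$, so $v,v'$ are the two endpoints of $e$. Here the hypotheses $|V_G|>2$ and connectedness force the existence of an edge at $v$ or at $v'$ that is not parallel to $e$, for otherwise $\{v,v'\}$ would be a connected component disjoint from the other vertices; the primitive Morse function at such a neighbouring edge separates the two links. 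Case (c): $v\neq v'$ and $e$ parallel to $e'$. The same connectedness argument applies verbatim to produce the separating primitive Morse function.

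The main obstacle I expect is the acyclicity step in the forward direction, because compatibility in $\M(G)$ is strictly stronger than pairwise compatibility: I must ensure that substituting $e$ for $e'$ gives a genuine bijection between hypothetical cycles. It is precisely the matching condition that guarantees $e$ does not already appear in $\tau$, which is what makes the substitution respect edge-distinctness; once that is secured, everything else reduces to bookkeeping of the three incompatibility regimes above.
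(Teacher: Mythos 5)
Your proposal is correct and follows essentially the same route as the paper: the forward (parallel) direction is the paper's ``trivial'' implication, which you spell out via the arrow-substitution argument, and the reverse direction is the same three-case analysis of non-parallel incompatibility (shared source; shared edge; a parallel $2$-cycle), using connectedness and $|V_G|\geq 3$ to exhibit a primitive Morse function lying in exactly one of the two links. The only detail to make explicit in your cases (b) and (c) is the orientation of the separating arrow: for a non-parallel edge $a$ incident to, say, $v$, it is $(v,a)$ that belongs to $lk((v',e'),\M(G))$ but not to $lk((v,e),\M(G))$, whereas the oppositely oriented arrow on $a$ lies in both links and separates nothing.
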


\begin{proof} Suppose first that $(v,e)\nsim (v',e')$ and $lk((v,e),\M(G))=lk((v',e'),\M(G))$. If $(v,e)$ and $(v',e')$ are not parallel in $\M(G)$, then there are only three possibilities for the edges $e$ and $e'$ in $G$ which are shown in Figure \ref{Figure:PossibilitiesArise}.

\begin{figure}[H]
\centering
\includegraphics[scale=0.7]{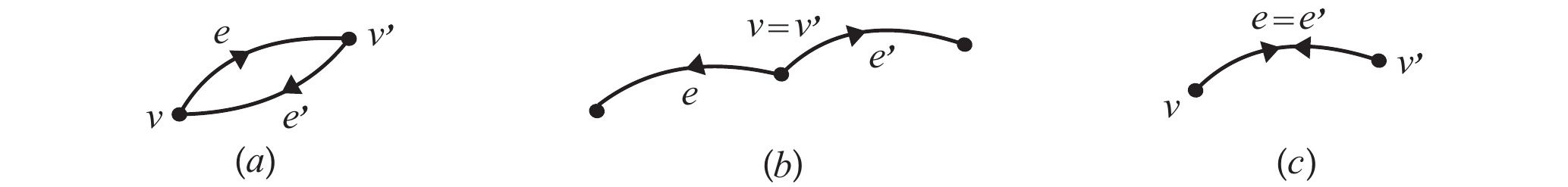}
\caption{}
\label{Figure:PossibilitiesArise}
\end{figure}

Since $|V_G|\geq 3$ and $G$ is connected, in each of the three cases, $G$ locally looks as in Figure \ref{Figure:PossibilitiesContradicted}.

\begin{figure}[H]
\centering
\includegraphics[scale=0.7]{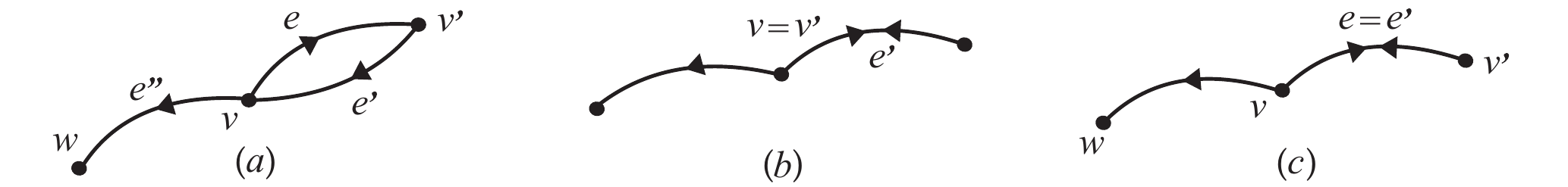}
\caption{}
\label{Figure:PossibilitiesContradicted}
\end{figure}

This contradicts the fact that  $lk((v,e),\M(G))=lk((v',e'),\M(G))$. The other implication is trivial.\end{proof}

Given a simplicial complex $K$, we define an equivalence relation $\r$ on $V_K$ as follows:
 $$v\r w\Leftrightarrow v=w \text{ or } \{v,w\}\notin K\text{ and }lk(v,K)=lk(w,K).$$
Let $\widetilde{K}$ be the simplicial complex whose vertices are the equivalence classes of vertices of $K$ and whose simplices are the sets $\{\tilde{v}_0,\ldots,\tilde{v}_r\}$ such that $\{v_0,\ldots,v_r\}\in K$. Here $\tilde v$ denotes the equivalence class of the vertex $v$. Note that $\widetilde{K}$ is well-defined since, if $v_i\r v_i'$ then $\{v_0,\ldots,v_i,\ldots,v_r\}\in K$ if and only if $\{v_0,\ldots,v_i',\ldots,v_r\}\in K$.

\begin{prop}\label{Proposition:QuotientIsomorphism} Let $K, L$ be simplicial complexes and let $\tilde K$ and $\tilde L$ be as above. If $f:K\rightarrow L$ is a simplicial isomorphism then the map $\tilde{f}:\widetilde{K}\rightarrow\widetilde{L}$ given by $\tilde{f}(\tilde{v})=\widetilde{f(v)}$ is a simplicial isomorphism.\end{prop}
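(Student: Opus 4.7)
The plan is to verify three things in turn: that $\tilde{f}$ is well-defined on equivalence classes, that it is a simplicial map, and that it is a bijection with simplicial inverse $\widetilde{f^{-1}}$. None of these should be deep, since the relation $\r$ is defined entirely in terms of two invariants (non-edges and vertex links) that a simplicial isomorphism preserves; the bulk of the work is organizing this carefully.

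First I would show well-definedness. Suppose $v\,\r\,w$ with $v\neq w$, so $\{v,w\}\notin K$ and $lk(v,K)=lk(w,K)$. Applying $f$, the non-edge is preserved: if $\{f(v),f(w)\}\in L$ then $\{v,w\}=\{f^{-1}(f(v)),f^{-1}(f(w))\}\in K$, a contradiction. Moreover, any simplicial isomorphism sends links to links, that is $f(lk(v,K))=lk(f(v),L)$ and $f(lk(w,K))=lk(f(w),L)$. Combined with $lk(v,K)=lk(w,K)$ this yields $lk(f(v),L)=lk(f(w),L)$, hence $f(v)\,\r\,f(w)$, so $\widetilde{f(v)}=\widetilde{f(w)}$ in $\tilde L$. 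This justifies defining $\tilde f(\tilde v):=\widetilde{f(v)}$.

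Next I would check that $\tilde f$ is a simplicial morphism. Take a simplex $\{\tilde v_0,\ldots,\tilde v_r\}\in\widetilde K$; by definition there are representatives with $\{v_0,\ldots,v_r\}\in K$, and the preceding paragraph guarantees that replacing any $v_i$ by an $\r$-equivalent vertex preserves this membership. Since $f$ is a simplicial isomorphism, $\{f(v_0),\ldots,f(v_r)\}\in L$, so $\{\tilde f(\tilde v_0),\ldots,\tilde f(\tilde v_r)\}=\{\widetilde{f(v_0)},\ldots,\widetilde{f(v_r)}\}\in\widetilde L$. Moreover, distinct classes $\tilde v_i\neq\tilde v_j$ in a simplex correspond to vertices with $\{v_i,v_j\}\in K$, so they cannot be $\r$-related, and the same holds for their images under $f$; hence the listed classes in $\widetilde L$ are actually distinct, so we do obtain an $r$-simplex.

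Finally, the same construction applied to $f^{-1}:L\to K$ yields a simplicial morphism $\widetilde{f^{-1}}:\widetilde L\to\widetilde K$, and on vertices
\[
\widetilde{f^{-1}}\bigl(\tilde f(\tilde v)\bigr)=\widetilde{f^{-1}}\bigl(\widetilde{f(v)}\bigr)=\widetilde{f^{-1}(f(v))}=\tilde v,
\]
and symmetrically in the other order. Thus $\tilde f$ and $\widetilde{f^{-1}}$ are mutually inverse simplicial maps, so $\tilde f$ is a simplicial isomorphism. The only step that requires any real care is the first one, and even there the main point is the easy observation that isomorphisms preserve non-adjacency and commute with the link construction.
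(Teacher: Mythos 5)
Your proof is correct and follows essentially the same route as the paper: well-definedness via the facts that an isomorphism preserves non-edges and commutes with links, and then the observation that $\widetilde{f^{-1}}$ is the inverse of $\tilde f$. You spell out the simpliciality and distinctness of classes a bit more explicitly than the paper does, but the argument is the same.
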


\begin{proof} We prove first that $\tilde f$ is well-defined. Suppose $v\r v'$ with $v\neq v'$. Since $\{v,v'\}\notin K$ and $f$ is an isomorphism then $\{f(v),f(v')\}\notin L$. Also, if $\{f(v)\}\cup \sigma\in L$ then $\{v\}\cup f^{-1}(\sigma)\in K$, which implies that $\{v'\}\cup f^{-1}(\sigma)\in K$. Therefore $f(v')\cup\sigma \in L$.

Finally, $\tilde{f}$ is an isomorphism since $\tilde{f}^{-1}=\widetilde{f^{-1}}$.\end{proof}

\begin{defi} For a multigraph $G$ we define the \emph{simplification} of $G$, denoted by $sG$, as the simple graph obtained from $G$ by identifying parallel edges.\end{defi}

\begin{obs}\label{Remark:SimplificationGraphs}
By Lemma \ref{Lemma:ParallelCharacterization} one can check that the map $f:\widetilde{\M(G)}\to \M(sG)$ defined by $f(\widetilde{(v,e)})=(v,\overline{e})$ is a well-defined isomorphism. Here $\overline{e}$ is the image of the edge $e$ in $sG$.
\end{obs}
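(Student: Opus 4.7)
The plan is to verify three things: (i) $f$ is well-defined on $\r$-classes, (ii) $f$ is a vertex bijection, and (iii) $f$ preserves simplices in both directions.

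I would handle (i) and (ii) together via a direct application of Lemma \ref{Lemma:ParallelCharacterization}. If $(v,e)\,\r\,(v',e')$ in $\M(G)$ with $(v,e)\neq(v',e')$, then they are incompatible and have equal links, so the lemma forces $v=v'$ and $e\parallel e'$; hence $(v,\overline{e})=(v',\overline{e'})$, giving well-definedness. Reading this backwards gives vertex-injectivity: if $(v,\overline{e})=(v',\overline{e'})$ then $v=v'$ and $e\parallel e'$, so $(v,e)$ and $(v',e')$ are parallel in $\M(G)$ and hence $\r$-equivalent by the lemma. Vertex-surjectivity is immediate since every edge of $sG$ lifts to some edge of $G$.

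The heart of the argument will be (iii). I would think of each primitive $(v,e)$ as the orientation of $e$ pointing to $v$; a simplex of $\M(G)$ then becomes a set of oriented edges which forms a matching in the Hasse diagram (each vertex paired with at most one edge, each edge oriented at most once) and whose underlying directed subgraph is acyclic. The key observation to establish is that two parallel edges $e_i\parallel e_j$ cannot both appear in such a simplex: if their paired vertices coincide the matching condition fails, while if the paired vertices are the two endpoints $\{u,w\}$ of the common parallel class then the induced orientations close into the directed $2$-cycle $u\to w\to u$, violating acyclicity. From this, the $\overline{e}_i$ are pairwise distinct in $sG$, the matching condition passes verbatim to $\M(sG)$, and any directed cycle in $sG$ among the $\overline{e}_i$ would lift, along the same vertex sequence, to a directed cycle in $G$ among the $e_i$---which is forbidden. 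For the converse, given a simplex $\{(v_i,\overline{e}_i)\}$ of $\M(sG)$ I would pick arbitrary representatives $e_i\in\overline{e}_i$: distinctness of the $\overline{e}_i$ forces the $e_i$ to be pairwise non-parallel, the matching condition in $G$ follows, and any directed cycle in $G$ would project to one in $sG$, so $\{(v_i,e_i)\}$ is a simplex of $\M(G)$ whose image under $f$ is the given simplex.

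The main obstacle will be precisely this parallel-edge analysis in (iii); once one has established that parallel edges of $G$ can never coexist in a compatible collection, the correspondence between $\M(G)$ and $\M(sG)$ reduces to a routine translation along $e\mapsto\overline{e}$.
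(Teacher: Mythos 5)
Your proposal is correct and supplies exactly the verification the paper leaves to the reader (the remark only cites Lemma~\ref{Lemma:ParallelCharacterization} and says ``one can check''): the lemma gives well-definedness and vertex bijectivity, and the acyclic-matching/directed-forest description of simplices of $\M(G)$ gives the simplex correspondence, with the key observation---that two parallel edges can never occur in a compatible collection---being the right crux. The only caveat worth recording is that, exactly as in Lemma~\ref{Lemma:ParallelCharacterization}, the argument tacitly assumes $G$ is connected with more than two vertices, the remaining small cases being trivial to treat directly.
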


\begin{proof}[Proof of Theorem B] Let $F:\M(G)\rightarrow\M(G')$ be an isomorphism. By Proposition \ref{Proposition:QuotientIsomorphism} and Remark \ref{Remark:SimplificationGraphs}, $F$ induces an isomorphism $\M(sG)\rightarrow\M(sG')$ which we also denote by $F$. By Theorem \ref{Theorem:MainForSimpleGraphs} there is an isomorphism $f:sG\rightarrow sG'$ sending a vertex $v$ to $s(F(v,e))$ for any edge $e$ incident to $v$. Then, in order to see that $G$ and $G'$ are isomorphic, we only need to check that $|E_G(v,w)|=|E_{G'}(f(v),f(w))|$ for any pair of vertices $v,w$ of $G$.

We can suppose that $|E_G(v,w)|\neq 0$ and choose some $e\in E_G(v,w)$. Then $(v,e)\in\M(G)$ and let $e'=t(F(v,e))\in E_{G'}(f(v),f(w))$. Note that the set $E_G(v,w)$ is in bijection with the set $\{(v,a)\in\M(G),\ (v,a)\nsim (w,e)\}$. Similarly, $E_{G'}(f(v),f(w))$ is in bijection with $\{(f(v),a')\in\M(G'),\ (f(v),a')\nsim (f(w),e')\}$. By the isomorphism $F$, both sets have the same cardinality.
\end{proof}

Chari and Joswig asked in \cite{ChJo} whether there is any connection between the homotopy types of $K$ and $\M(K)$. They implicitly showed that the homotopy type of $K$ does not determine the homotopy type of $\M(K)$. For instance, by \cite[Proposition 5.1]{ChJo} the complex of Morse functions associated to the 1-simplex is homotopy equivalent to $S^0$ and the one associated to the 2-simplex is homotopy equivalent to $S^1\vee S^1\vee S^1\vee S^1$. The following example shows that the homotopy type of $\M(K)$ does not determine the homotopy type of $K$ either.

\begin{ej} Consider the following simple graphs. $G$ has three vertices $u,v,w$ and two edges $uv,uw$. The graph $G'$ has four vertices $a,b,c,d$ and four edges $ab,bc,ac,ad$. Note that they are not homotopy equivalent while their associated complexes of Morse functions are both contractible.\end{ej}

\section{Proof of the main result}

We now extend the result of Corollary \ref{Corollary:MainTheoremForSimpleGraphs} to simplicial complexes of any dimension. The idea behind the proof is that, in ``almost all" cases, a simplicial isomorphism $F:\M(K)\rightarrow\M(L)$ restricts to an isomorphism $F|_{\M(K^{(1)})}:\M(K^{(1)})\rightarrow\M(L^{(1)})$ between the $1$-skeletons and by Theorem \ref{Theorem:MainForSimpleGraphs} the $1$-skeletons of $K$ and $L$ are isomorphic. Then an inductive argument shows that an isomorphism $\M(K)\equiv\M(L)$ forces all skeletons of $K$ and $L$ to be isomorphic.

In the following we will use Forman's concept of $V$-path associated to a discrete vector field $V$ over a complex $K$. Given a discrete Morse function $f:K\rightarrow\mathbb{R}$, an \emph{$f$-path of index $k$} is a sequence of regular $k$-simplices $\sigma_0,\ldots,\sigma_r\in K$ such that $\sigma_i\neq\sigma_{i+1}$ for all $0\leq i\leq r-1$ and $\sigma_{i+1}\prec \tau_i$, where $\tau_i$ is the target of the regular pair with source $\sigma_i$. This is actually the notion of a \emph{$V_f$-path}, where $V_f$ is the discrete gradient vector field of $f$. The $f$-path is called \emph{closed} if $\sigma_0=\sigma_r$ and \emph{non-stationary} if $\sigma_0\neq\sigma_1$. We shall be exclusively dealing with non-stationary closed $f$-paths, so we will simply refer to them as  \emph{$f$-cycles}. Note that an $f$-cycle of index $k$ is equivalent to having an incompatible collection $P=\{(\sigma_0,\tau_0),\ldots,(\sigma_r,\tau_r)\}$ of primitive Morse functions of index $k\geq 0$ such that every proper subset of $P$ is compatible. Equivalently, the full subcomplex of $\M(K)$ spanned by the vertices $(\sigma_0,\tau_0),\ldots,(\sigma_r,\tau_r)$ is the boundary $\partial\Delta^r$ of an $r$-simplex.

Note that an $f$-cycle has at least three primitive Morse functions. One with exactly three primitive Morse functions is said to be \emph{minimal} and two minimal $f$-cycles sharing exactly one regular pair are said to be \emph{adjacent}. From the mutually exclusive nature of properties \eqref{item:Property1DefinitionMorseFunction} and \eqref{item:Property2DefinitionMorseFunction} in page \pageref{item:Property1DefinitionMorseFunction} we see that no collection of regular pairs of a given combinatorial Morse function admits $f$-cycles of any index. Actually, Forman proved that this property characterizes the discrete vector fields that arise from a discrete Morse function (see \cite[Theorem 9.3]{For1}).


\begin{obss}\label{Remarkssss}\mbox{}\begin{enumerate}
\item[($i$)] Note that a cycle $e_0,\ldots,e_r$ in the $1$-skeleton of a complex $K$ gives rise to two possible $f$-cycles of index $0$ in $K$: choosing a vertex $v_0$ for $e_0$, one of them is $\{(v_0,e_0),(v_1,e_1),\ldots,(v_r,e_r)\}$ where $v_i\neq v_{i+1}$ for all $i=0,\ldots,r-1$. The other $f$-cycle arises from selecting the other vertex of $e_0$ to be the source of the primitive Morse function.

\item[($ii$)] It is easy to see that if $\{(\sigma_1,\tau_1),(\sigma_2,\tau_2),(\sigma_3,\tau_3)\}$ is a minimal $f$-cycle of index $k-1$ then $\{\tau_1,\tau_2,\tau_3\}$ spans a complex with $k+2$ vertices and a complete $1$-skeleton.\end{enumerate}\end{obss}

The following result deals with the cases in which an isomorphism $\M(K)\to \M(L)$ does not restrict to an isomorphism $\M(K^{(1)})\to \M(L^{(1)})$.

\begin{prop}\label{Proposition:IsomorphismRestrictsToOneSkeleton} Let $K,L$ be  connected simplicial complexes and let $F:\M(K)\rightarrow\M(L)$ be a simplicial isomorphism. If there exists a primitive Morse function $(v,e)\in V_{\M(K)}$ of index $0$ such that $F(v,e)=(\sigma^{n-1},\tau^n)$ with $n\geq 2$, then $K=L=\partial\Delta^m$ for some $m\geq 2$.\end{prop}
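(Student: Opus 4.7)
The plan is to exploit the incompatibility-clique structure of $\M(K)$. Two primitive Morse functions are incompatible if and only if they share a simplex, and a short three-pair Helly-style argument (comparing the dimensions of the shared simplices in any triple of pairwise-incompatible primitive pairs) shows that the maximal incompatibility cliques of $\M(K)$ are exactly the sets $S(\eta)=\{P\in\M(K):\eta\in P\}$ for simplices $\eta$ with $|S(\eta)|\geq 2$. Since $F$ preserves incompatibility, it sends maximal cliques to maximal cliques and induces a bijection $\Phi$ between the non-leaf simplices of $K$ and those of $L$ preserving the undirected Hasse adjacency. Each primitive pair $\{a,b\}$ lies in exactly the two maximal cliques $S(a)$ and $S(b)$, so $F(a,b)=\{\Phi(a),\Phi(b)\}$; applied to our pair, $\{\Phi(v),\Phi(e)\}=\{\sigma,\tau\}$.

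I would first rule out the case $\Phi(v)=\sigma$ by a dimensional collision at a vertex of an appropriate triangle. The identity $|S(e)|=|S(\tau)|=(n+1)+c_\tau\geq 3$ (with $c_\tau$ the number of immediate cofaces of $\tau$) forces the existence of a $2$-simplex $F\subset K$ containing $e$; write $F=\{v,v',u\}$ with $e=vv'$ and $e'=vu$. Since $\Phi$ preserves Hasse adjacency, $\Phi(e')$ is a Hasse neighbour of $\Phi(v)=\sigma$, so $\dim\Phi(e')\in\{n-2,n\}$, and $\Phi(F)$ is a Hasse neighbour of $\Phi(e)=\tau$, so $\dim\Phi(F)\in\{n-1,n+1\}$. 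The vertex $u$ is a Hasse neighbour of both $e'$ and $F$, hence $\Phi(u)$ must be a Hasse neighbour of both $\Phi(e')$ and $\Phi(F)$, so $\dim\Phi(u)\in\{\dim\Phi(e')\pm 1\}\cap\{\dim\Phi(F)\pm 1\}$. A direct check of the four combinations shows that for $n\geq 2$ this intersection is always empty (e.g.\ $\{n-1,n+1\}\cap\{n-2,n\}=\emptyset$), a contradiction. Therefore $\Phi(v)=\tau$ and $\Phi(e)=\sigma$, i.e.\ $\Phi$ reverses the face-coface order on $\{v,e\}$.

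The heart of the proof is to globalise this reversal: $\dim\Phi(\eta)=n-\dim\eta$ for every non-leaf $\eta$, so that $\Phi$ is a poset anti-isomorphism and $\dim K=\dim L=n$. The same $4$-cycle-plus-third-vertex argument applies at every primitive pair adjacent (in the Hasse sense) to a pair on which reversal has already been established: the order-preserving alternative at each step forces an empty dimensional intersection for the image of the third vertex, exactly as above but with shifted dimensions. Combined with the connectedness of the Hasse diagram of $K$ and the common-index property of the pairs in a minimal $f$-cycle (Remarks~\ref{Remarkssss}), this propagates the reversal throughout $K$. This globalisation is the main obstacle: one must verify at each propagation step that an appropriate $2$-simplex (or its higher-dimensional analogue) exists to host the local dimensional-collision argument, and the bookkeeping across the whole complex is delicate, especially for $n=2$, where the dimensions involved are small enough that some of the auxiliary clique-size inequalities become tight.

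Finally, the dimension-reversing anti-isomorphism enforces rigid incidence counts on $K$: every $k$-simplex has $k+1$ codimension-$1$ faces, and via $\Phi$ exactly $n-k+1$ immediate cofaces. Hence every vertex has degree $n+1$, every edge lies in exactly $n$ $2$-simplices, and inductively every $k$-simplex extends to a $(k+1)$-simplex in exactly $n-k+1$ ways. Together with the connectedness of $K$, these regularity conditions force the $1$-skeleton of $K$ to be the complete graph on exactly $n+2$ vertices and every $(k+1)$-subset of those vertices to span a $k$-simplex for $0\leq k\leq n$. Thus $K=\partial\Delta^{n+1}$, and symmetrically $L=\partial\Delta^{n+1}$, completing the proof with $m=n+1\geq 3$.
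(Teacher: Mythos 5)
Your clique dictionary is a genuinely different and appealing starting point, and most of it is sound: for simplicial complexes two primitive pairs are incompatible exactly when they share a simplex, the maximal pairwise-incompatible sets are the Hasse stars $S(\eta)$, and an isomorphism $F$ therefore induces a Hasse-adjacency-preserving bijection $\Phi$ on (non-leaf) simplices; your endgame, that a global dimension-reversing $\Phi$ forces the incidence numbers of $\partial\Delta^{n+1}$, can also be completed. But the pivotal local step is wrong. In the $2$-simplex $\{v,v',u\}$ the vertex $u$ is a codimension-two face of $\{v,v',u\}$, not a Hasse neighbour of it, so $\Phi(u)$ is under no obligation to lie in $\{\dim\Phi(e')\pm1\}\cap\{\dim\Phi(\{v,v',u\})\pm1\}$, and the claimed empty intersection proves nothing. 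Moreover no local dimension/parity bookkeeping around the triangle can rule out the order-preserving case $\Phi(v)=\sigma$: going around the Hasse hexagon $v,\,vv',\,v',\,v'u,\,u,\,uv$ the image dimensions $n-1,\,n,\,n+1,\,n,\,n-1,\,n$ are perfectly consistent with adjacency preservation. The failure is conceptual: everything you use is visible in the compatibility graph (the $1$-skeleton of $\M(K)$, equivalently its maximal cliques), whereas the configurations that must be excluded are detected only by the higher simplices of $\M(K)$ — the minimal $f$-cycles, which are precisely cliques of the compatibility graph that are \emph{not} faces of $\M(K)$. This is exactly the information the paper's proof exploits (the preimage of a minimal $f$-cycle of index $n-1$ inside $\tau$ must again be an $f$-cycle, as in Figure \ref{Figure:DemoProp2}, and Steps 1--2 repeatedly map $f$-cycles back and forth), and an argument that never goes beyond pairwise compatibility cannot substitute for it without further justification.

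The second gap is that the globalisation — establishing $\dim\Phi(\eta)=n-\dim\eta$ for all $\eta$ — is exactly what you call the heart of the proof, and it is not carried out: it was to be driven by iterating the same local collision argument, which, as above, does not produce a contradiction even once. There are also smaller unaddressed points: a pair $(x,a)$ whose source is a degree-one vertex lies in only one maximal clique, so the identity $F(a,b)=\{\Phi(a),\Phi(b)\}$ needs a separate argument there; and since such vertices are outside the domain of $\Phi$, the final regularity counts do not by themselves exclude pendant simplices hanging off the putative $\partial\Delta^{n+1}$ — the analogue of the paper's Step 3 is still needed. As it stands, the proposal reduces the proposition to an anti-isomorphism statement that is plausible (and true for $\partial\Delta^{n+1}$, where complementation realises it) but is nowhere proved.
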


\begin{proof} We may assume that $n$ is maximal with the property that there exists $(v,e)\in V_{\M(K)}$ of index $0$ whose image is $(\sigma^{n-1},\tau^n)$ for some $n\geq 2$. With this assumption, we shall prove that $K=\partial\Delta^{n+1}$.  Let $w$ be the other end of $e$ and consider $F(w,e)$. Since $n\geq 2$ it is not hard to build a primitive Morse function incompatible with $F(v,e)$ and $F(w,e)$ at the same time, thus $e$ must be a face of a $2$-simplex $\{v,w,u\} \in K$. Let $e'= wu$ and $e''=uv$ and consider the minimal $f$-cycle $\{(v,e),(w,e'),(u,e'')\}$ in $K$. Then $\{F(v,e),F(w,e'),F(u,e'')\}$ is a minimal $f$-cycle of index $n-1$ in $L$. Let $F(v,e)=(\sigma,\tau)$, $F(w,e')=(\sigma',\tau')$ and $F(u,e'')=(\sigma'',\tau'')$. A simple reasoning shows that if $\sigma'\prec\tau$ then the situation of Figure \ref{Figure:DemoProp2} would arise, which leads to a contradiction.
\begin{figure}[H]
\centering
\includegraphics[scale=0.7]{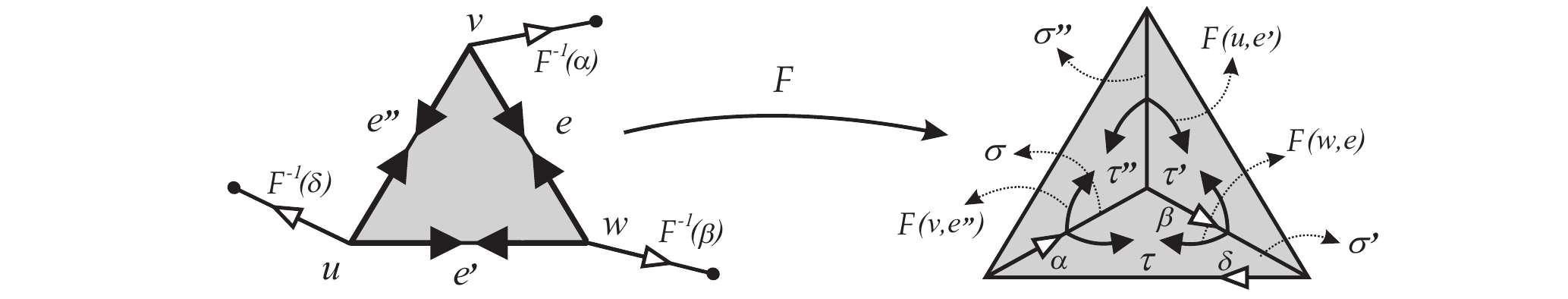}
\caption{The image of $(v,e'')$, $(u,e')$ and $(w,e)$ in the case $\sigma'\prec\tau$. If we consider a minimal $f$-cycle of index $n-1$ $\{\alpha,\beta,\delta\}$ in $\tau$ (in white arrows) then its preimage by $F$ does not constitute an $f$-cycle in $K$, which contradicts the fact that $F$ is an isomorphism.}
\label{Figure:DemoProp2}
\end{figure}
\noindent Therefore, we must have $\sigma\prec\tau''$ and the situation is as shown in Figure \ref{Figure:DemoProp1}. Let $Q$ be the subcomplex generated by the $n$-simplices $\tau,\tau',\tau''$ and note that $Q$ has $n+2\geq 4$ vertices and a complete $1$-skeleton (see Remark \ref{Remarkssss} ($ii$)). Let $S$ denote the collection of all primitive Morse function in $Q$ of index $0$ and let $G(x,a)=t(F^{-1}(x,a))\in K$ for each $(x,a)\in S$. We will prove that $K=\partial \Delta^{n+1}$ in various steps.

\begin{figure}[H]
\centering
\includegraphics[scale=0.7]{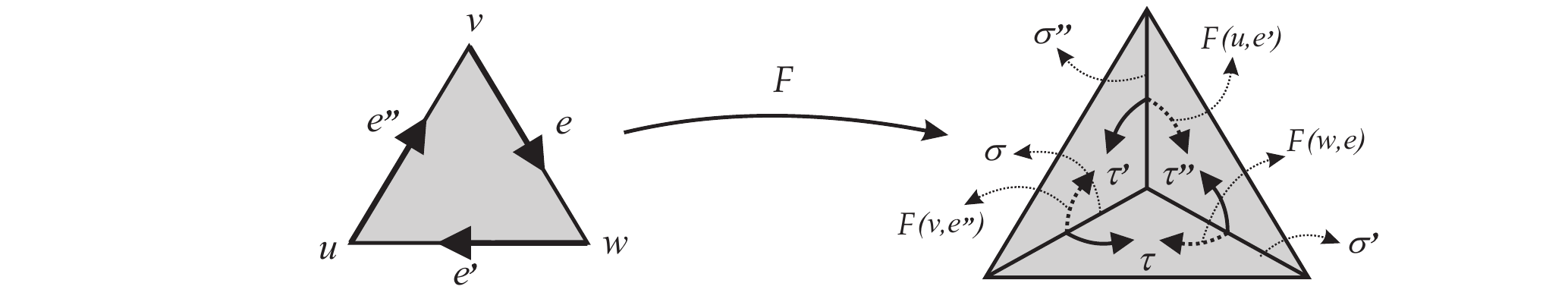}
\caption{}
\label{Figure:DemoProp1}
\end{figure}

\textsc{Step 1.} We show first that $G(S)$ is a collection of $k$-simplices for a fixed $k\leq n$. Consider a sequence $\tau=\eta^{n}\succ\sigma=\eta^{n-1}\succ\eta^{n-2}\succ\cdots\succ\eta^1\succ\eta^0=y$ of faces of the $n$-simplex $\tau$ ending in a vertex $y$ of $\tau$. Each pair $(\eta^{i-1},\eta^i)$ is incompatible with the previous and the next pair. Since incompatibility for a given regular pair only happens with regular pairs of one dimension up, one dimension down or of the same dimension, we conclude that $F^{-1}(y,\eta^1)=(\psi^{k-1},\rho^k)$ for some $k\leq n$. Now, since $Q$ has a complete 1-skeleton then any edge $a\in Q$ is part of a cycle also containing $\eta^1$. Therefore, any $(x,a)\in S$ is part of an $f$-cycle of index $0$ containing either $(y,\eta^1)$ or $(z,\eta^1)$, where $z$ is the other end of $\eta^1$ (see Remark \ref{Remarkssss} ($i$)). Since by definition $F$ maps $f$-cycles to $f$-cycles, it suffices to show that $t(F^{-1}(z,\eta^1))$ is also a $k$-simplex. But since $|V_Q|\geq 4$, we can form an $f$-cycle of index $0$ containing $(y,\eta^1)$ and a new pair $(p,\psi)$, and another one containing $(z,\eta^1)$ and $(p,\psi)$ as shown in Figure \ref{Figure:DemoFinal1}.

\begin{figure}[H]
\centering
\includegraphics[scale=0.7]{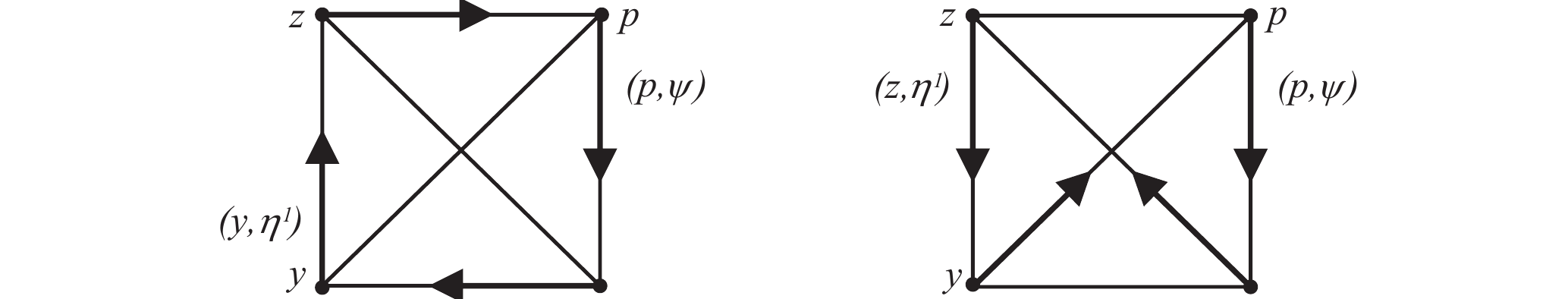}
\caption{}
\label{Figure:DemoFinal1}
\end{figure}

\textsc{Step 2.} We show that $k=n$ and that $G(S)$ spans $\partial\Delta^{n+1}$. Fix a minimal $f$-cycle $C_1=\{(v_1, v_1v_2),(v_2,v_2v_3),(v_3,v_1v_3)\}$ in $Q$ and let $T$ be the subcomplex of $K$ generated by the three $k$-simplices in $G(C_1)$. Note that $|V_{T}|=k+2$ by Remark \ref{Remarkssss} ($ii$). We claim that all $k$-simplices in $G(S)$ have their vertices in $V_{T}$. To see this, let $(x,a)\in S$ and let $y$ be the other end of $a$. All possible situations for $(x,a)$ with respect to $C_1$ are contemplated in Figure \ref{Figure:regularpairrespectC1} where one can verify that it is  always possible to find a sequence of adjacent minimal $f$-cycles between $C_1$ and a minimal $f$-cycle containing $(x,a)$.
\begin{figure}[h]
\centering
\includegraphics[scale=0.6]{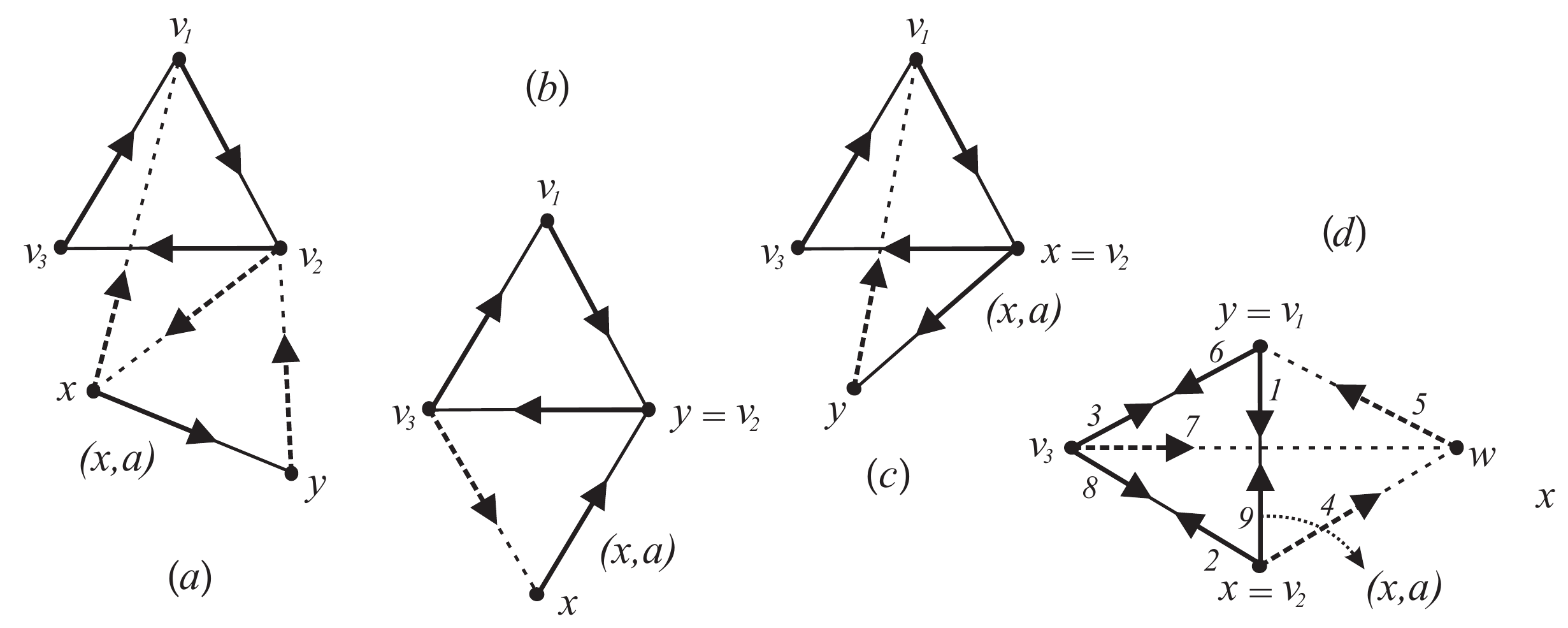}
\caption{The sequence of adjacent minimal $f$-cycles in situation ($d$) is given by $C_1=\{1,2,3\},\{1,4,5\},\{5,6,7\}$ and $\{6,8,9=(x,a)\}$.}
\label{Figure:regularpairrespectC1}
\end{figure}
By an inductive argument it suffices to show that the image by $G$ of a regular pair in a minimal $f$-cycle adjacent to $C_1$ has it vertices in $V_T$. Let $C_2=\{(v_2,v_2v_3),(v_3,v_3v_4),(v_4,v_2v_4)\}$ be a generic minimal $f$-cycle adjacent to $C_1$. Since the $k$-simplex $G(v_2,v_2v_3)\in G(C_1)\cap G(C_2)$, by Remark \ref{Remarkssss} ($ii$) it suffices to show  that the only vertex  $q\in V_{T}\setminus V_{G(v_2,v_2v_3)}$ is also in $G(C_2)$. But since $(v_3,v_3v_4)\nsim (v_3,v_1v_3)$ then either $s(F^{-1}(v_3,v_3v_4))=s(F^{-1}(v_3,v_1v_3))$ or $t(F^{-1}(v_3,v_3v_4))=t(F^{-1}(v_3,v_1v_3))$. The situation must be as shown in Figure \ref{Figure:DemoProp1} and the possible cases are shown in Figure \ref{Figure:demoprop43a}. This proves that $q\in G(C_2)$.

\begin{figure}[h]
\centering
\includegraphics[scale=0.8]{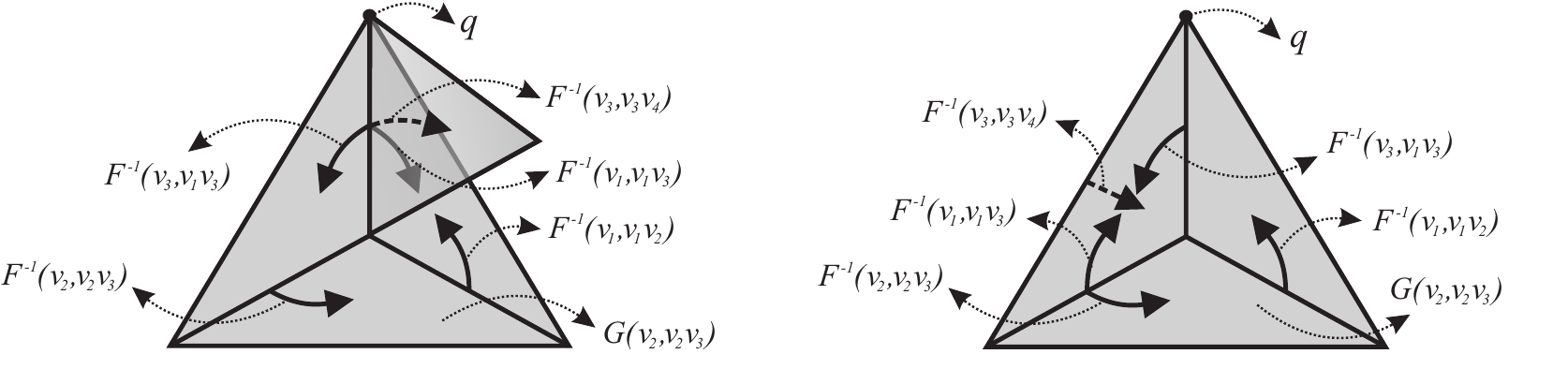}
\caption{Here $F^{-1}(v_3,v_3v_4)$ is drawn with a dashed arrow. On the left: the case $s(F^{-1}(v_3,v_3v_4))=s(F^{-1}(v_3,v_1v_3))$ cannot happen because we get more than $k+2$ vertices. On the right: in the case $t(F^{-1}(v_3,v_3v_4))=t(F^{-1}(v_3,v_1v_3))$ we readily see that $q\in G(C_2)$.}
\label{Figure:demoprop43a}
\end{figure}

Now, since $Q$ has a complete $1$-skeleton then we can form a cycle in $Q^{(1)}$ containing all the vertices of $Q$. The corresponding $f$-cycle of index $0$ has as a preimage by $F$ an $f$-cycle of index $k-1$ with $n+2$ regular pairs. By definition, the target of all these pairs are distinct $k$-simplices. Therefore, we conclude that $k=n$ and that $G(S)$ spans $\partial\Delta^{n+1}$.
 
\textsc{Step 3.} We show that $K$ is spanned by $G(S)$. First, note that two primitive Morse functions $(x,a),(x,b)\in S$ of index $0$ sharing the same source vertex $x\in V_Q$ are mapped by $F^{-1}$ to primitive Morse functions with the same target $n$-simplex (i.e. $G(x,a)=G(x,b)$). To see this, note that since  $F^{-1}(x,a)\nsim F^{-1}(x,b)$ then, by \textsc{Step 1}, either $s(F^{-1}(x,a))=s(F^{-1}(x,b))$ or $G(x,a)=G(x,b)$. Assume the first case holds and let $(x,c)\in S$ with $c\neq a,b$. Note that such a pair $(x,c)$ exists because $n\geq 2$. Since the only primitive Morse functions incompatible with both $F^{-1}(x,a)$ and $F^{-1}(x,b)$ have source $s(F^{-1}(x,a))=s(F^{-1}(x,b))$, there exists an $n$-simplex in $G(S)$ with one vertex $q$ not in  $V_{G(x,a)}\cup V_{G(x,b)}$ (see Figure \ref{Figure:demofinalpunto3}).
\begin{figure}[h]
\centering
\includegraphics[scale=0.7]{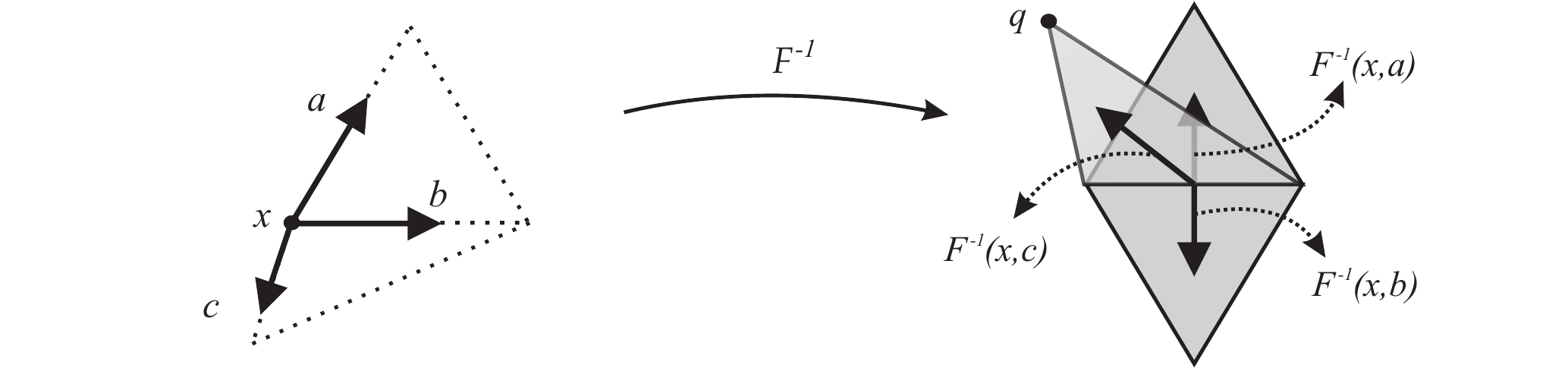}
\caption{}
\label{Figure:demofinalpunto3}
\end{figure}

This is a contradiction since, by the reasoning made in \textsc{Step 2}, all the vertices in $G(S)$ are included in the set of $n+2$ vertices determined by any two distinct $n$-simplices in $G(S)$. We conclude that $G(x,a)=G(x,b)$, thus we have a bijection between $V_Q$ and $G(S)$. Suppose now that $K-\langle G(S)\rangle\neq\emptyset$. Here $\langle G(S)\rangle$ denotes the subcomplex spanned by $G(S)$. Let $\tilde{e}\in K$ be an edge such that $\tilde{e}\cap\langle G(S) \rangle$ consists of a vertex $z$. Consider the primitive Morse function $(z,\tilde{e})$ and let $(z,\tilde{e}'),(z,\tilde{e}'')\in \langle G(S) \rangle$. Since $(z,\tilde{e})\nsim (z,\tilde{e}')$ and $(z,\tilde{e})\nsim (z,\tilde{e}'')$ then $F(z,\tilde{e})\nsim F(z,\tilde{e}')$ and $F(z,\tilde{e})\nsim F(z,\tilde{e}'')$. Since $n$ is maximal and $t(F(z,\tilde{e}'))=t(F(z,\tilde{e}''))$ by the previous reasoning, then $t(F(z,\tilde{e}))$ must be equal to $t(F(z,\tilde{e}'))=t(F(z,\tilde{e}''))$. 
This is a contradiction because, due to the bijection between $V_Q$ and $G(S)$, all $n+1$ regular pairs whose target is this $n$-simplex are in the image of the $n+1$ regular pairs in $S$ with source $z$. This concludes the proof.
\end{proof}


Similarly as we did with the edges of simple graphs, for simplicity of notation, an $n$-simplex $\sigma=\set{v_0,\ldots,v_n}\in K$ will be denoted by $\sigma=v_0\cdots v_n$.
 
\begin{proof}[Proof of Theorem A] Let $F:\M(K)\rightarrow\M(L)$ be an isomorphism. By Proposition \ref{Proposition:IsomorphismRestrictsToOneSkeleton} we may assume that every primitive Morse function  of index $0$ in $\M(K)$ (resp. in $\M(L)$) is mapped by $F$ (resp. by $F^{-1}$) to a primitive Morse function of index $0$. This gives a well-defined isomorphism $F|_{\M(K^{(1)})}:\M(K^{(1)})\rightarrow\M(L^{(1)})$. By Theorem \ref{Theorem:MainForSimpleGraphs} there exists an isomorphism $f:K^{(1)}\to L^{(1)}$ with $f(v)=s(F(v,e))$ for any $e\succ v$. Note that for every edge $xy\in K$ we have $F(x,xy)=(f(x),f(x)f(y))$. We will show by induction that for any
$(n+1)$-simplex $v_0\cdots v_{n+1}$, 
 $$F(v_0\cdots v_{n},v_0\cdots v_{n} v_{n+1})=(f(v_0)\cdots f(v_{n}),f(v_0)\cdots f(v_{n})f(v_{n+1})).$$
  
Given $\tau=v_0\cdots v_{n+1}\in K$, consider the following two families of primitive Morse functions: \begin{itemize} 
\item $I=\{(v_0\cdots\hat{v}_i\cdots v_{n},v_0\cdots v_{n})\ , \ 0\leq i\leq n\}$ 
\item $J=\{(v_1\cdots\hat{v}_j\cdots v_{n+1},v_1\cdots v_{n+1})\ ,\ 1\leq j\leq n+1\}$,\end{itemize}
where the hat over a vertex means that that vertex is to be omitted. By induction, \small \begin{itemize}
\item $F(v_0\cdots \hat{v}_i\cdots v_{n},v_0\cdots v_{n})=(f(v_0)\cdots\widehat{f(v_i)}\cdots f(v_{n}),f(v_0)\cdots f(v_{n}))$ \normalsize and \small
\item $F(v_1\cdots\hat{v}_j\cdots v_{n+1},v_1\cdots v_{n+1})=(f(v_1)\cdots\widehat{f(v_j)}\cdots f(v_{n+1}),f(v_1)\cdots f(v_{n+1}))$. \end{itemize}
\normalsize
 Since $(v_0\cdots v_{n},v_0\cdots v_{n+1})\in\M(K)$ is incompatible with every element of $I$ then $$F(v_0\cdots v_{n},v_0\cdots v_{n+1})=(f(v_0)\cdots f(v_n),f(v_0)\cdots f(v_n)w)$$ for some vertex $w\in L$. On the other hand, since $(v_1\cdots v_{n+1},v_0 \cdots v_{n+1})\in\M(K)$ is incompatible with every element of $J$ then $$F(v_1\cdots v_{n+1},v_0 \cdots v_{n+1})=(f(v_1)\cdots f(v_{n+1}),f(v_1)\cdots f(v_{n+1})u)$$ for some vertex $u\in L$. But $(v_0\cdots v_{n},v_0\cdots v_{n+1})\nsim (v_1\cdots v_{n+1},v_0 \cdots v_{n+1})$, so we must have $f(v_0)\cdots f(v_n)w=f(v_1)\cdots f(v_{n+1})u$, and therefore $w=f(v_{n+1})$ and $u=f(v_0)$. 
\end{proof}




\begin{thebibliography}{99}

\bibitem{AyFeQuVi1} R. Ayala, L. M. Fern\'andez, A. Quintero, and J. A. Vilches. \textit{A note on the pure Morse complex of a graph.} Topology Appl. 155 (2008), No. 17-18, 2084-2089.



\bibitem{BaKo} E. Babson, D. N. Kozlov. \textit{Proof of the Lov\'asz conjecture.} Ann. of Math. (2) 165 (2007), No. 3, 965-1007.

\bibitem{BjWe} A. Bj\"orner, V. Welker. \textit{Complexes of directed graphs.} SIAM J. Discrete Math. 12 (1999), No. 4, 413-424 (electronic).
       

\bibitem{Cha} M. K. Chari. \textit{On discrete Morse functions and combinatorial decompositions}. Discrete Math. 217 (2000), No. 1-3, 101-113.

\bibitem{ChJo} M. K. Chari, M. Joswig. \textit{Complexes of discrete Morse functions.} Discrete Math. 302 (2005), No. 1-3, 39–51. 
           
\bibitem{Eng} A. Engstr\"om. \textit{Complexes of directed trees and independence complexes.} Discrete Math. 309 (2009), No. 10, 3299-3309.

\bibitem{For1} R. Forman. \textit{Morse theory for cell complexes.} Adv. Math. 134 (1998), No. 1, 90-145.

\bibitem{For2} R. Forman. \textit{Witten-Morse theory for cell complexes.} Topology 37 (1998), No. 5, 945-979.


\bibitem{Joj} D. Joji\'c.  \textit{Shellability of complexes of directed trees.} Filomat 27 (2013), No. 8, 1551-1559.
      
\bibitem{Koz1} D. N. Kozlov. \textit{Complexes of directed trees.} J. Combin. Theory Ser. A 88 (1999), No. 1, 112-122.

\bibitem{LuWe} A. Lundell, S. Weingram. \textit{The topology of CW complexes.} Van Nostrand - Reinhold, New York (1969).


             



\end{thebibliography}
\end{document}